\newcommand\diag{\operatorname{diag}}
\newcommand{\one}{\mathbf{1}}
\newcommand{\given}{\,|\,}
\newcommand{\Ef}{{\mathbb{E}}}
\newcommand{\EE}[1]{\Ef\!\left(#1\right)}
\newcommand{\Pf}{{\mathbb{P}}}
\renewcommand{\P}[1]{\Pf\!\left(#1\right)}
\shorttitle{Identifiability of a model of molecular evolution}
\begin{document}

\title{Identifiability of a Markovian model of\\
molecular evolution with Gamma-distributed rates}
\date{\today}

\authorone[University of Alaska Fairbanks]{Elizabeth S.~Allman}
\addressone{Department of Mathematics and Statistics, University of Alaska Fairbanks, PO Box 756660, Fairbanks, AK 99775}
\emailone{e.allman@uaf.edu}
\authortwo[University of Wisconsin Madison]{C\'ecile~An\'e}
\addresstwo{Department of Statistics, University of Wisconsin Madison, Medical Science
Center, 1300 University Ave., Madison, WI 53706}
\emailtwo{ane@stat.wisc.edu}
\authorthree[University of Alaska Fairbanks]{John A.~Rhodes}
\addressthree{Department of Mathematics and Statistics, University of Alaska Fairbanks, PO Box 756660, Fairbanks, AK 99775}
\emailthree{j.rhodes@uaf.edu}

\vfil

\begin{abstract}
Inference of evolutionary trees and rates from biological sequences
is commonly performed using continuous-time Markov models of
character change. The Markov process evolves along an unknown tree
while observations arise only from the tips of the tree. Rate
heterogeneity is present in most real data sets and is accounted for
by the use of flexible mixture models where each site is allowed its
own rate. Very little has been rigorously established concerning the
identifiability of the models currently in common use in data
analysis, although non-identifiability was proven for a
semi-parametric model and an incorrect proof of identifiability was
published for a general parametric model (GTR+$\Gamma$+I). Here we
prove that one of the most widely used models (GTR+$\Gamma$) is
identifiable for generic parameters, and for all parameter choices
in the case of 4-state (DNA) models. This is the first proof of
identifiability of a phylogenetic model with a continuous
distribution of rates.
\end{abstract}

{
\keywords{phylogenetics, identifiability}

\ams{60J25}{92D15, 92D20}
}

\section{Introduction}

A central goal of molecular phylogenetics is to infer evolutionary
trees from DNA or protein sequences. Such sequence data come from
extant species at the tips of the tree -- the tree of life -- while
the topology of the tree relating these species is unknown.
Inferring this tree helps us understand the evolutionary
relationships between sequences.

Phylogenetic data analysis is often performed using Markovian models
of evolution: Mutations occur along the branches of the tree under a
finite-state Markov process. There is ample evidence that some
places in the genome undergo mutations at a high rate, while other
loci evolve very slowly, perhaps due to some functional constraint.
Such \emph{rate variation} occurs at all spatial scales, across
genes as well as across sites within genes. In performing inference,
this heterogeneity is accounted for by the use of flexible mixture
models where each site is allowed its own rate according to a rate
distribution $\mu$. In the context of molecular phylogenetics, the
use of a parametric family for $\mu$ is generally considered both
advantageous and sufficiently flexible.

\smallskip

The question of identifiability for such a rate-variation model is a
fundamental one, as standard proofs of consistency of statistical
inference methods begin by establishing identifiability. Without
identifiability, inference of some or all model parameters may be
unjustified. However, since phylogenetic data is gathered only from
the tips of the tree, understanding when one has identifiability of
the tree topology and other parameters for phylogenetic models poses
substantial mathematical challenges. Indeed, it has been shown that
the tree and model parameters are \emph{not} identifiable if the
distribution of rates $\mu$ is too general, even when the Markovian
mutation model is quite simple \cite{SSH94}.

The most commonly used phylogenetic model is a \emph{general
time-reversible} (GTR) Markovian mutation model along with a Gamma
distribution family ($\Gamma$) for $\mu$. For more flexibility, a
class of invariable sites (I) can be added by allowing $\mu$ to be
the mixture of a Gamma distribution with an atom at $0$
\cite{Fel04}. Numerous studies have shown that the addition to the
GTR model of rate heterogeneity through $\Gamma$, I, or both,  can
considerably improve fit to data at the expense of only a few
additional parameters. In fact, when model selection procedures are
performed, the GTR+$\Gamma$+I model is preferred in most studies.
These stochastic models are the basis of hundreds of publications
every year in the biological sciences
%--43 in Systematic Biology alone in 2006.
--- over 40 in \emph{Systematic Biology} alone in 2006.
% You are brave, Cecile
%
% I counted: 8 in volume 55(1), 10 in 55(2), 7 in 55(3), 8 in 55(4),
% 5 in 55(5) and 5 in 55(6). I included those publications that
% used HKY+Gamma because our paper is the first one to prove
% its identifiability (not just GTR+...). I also included the
% supertree papers that used input trees based on likelihood methods,
% assuming those methods used Gamma or I. However, I did not
% included supertree papers that were based on parsimony. Of course.
%
Their impact is immense in the fields of evolutionary biology,
ecology, conservation biology, and biogeography, as well as in
medicine, where, for example, they appear in the study of the
evolution of infectious diseases such as HIV and influenza viruses.

\smallskip

The main result claimed in the widely-cited paper \cite{Rog01}
is the following:

\smallskip

\noindent \emph{The $4$-base (DNA) GTR+$\Gamma$+I model, with
unknown mixing parameter and $\Gamma$ shape parameter, is
identifiable from the joint distributions of pairs of taxa.}

\smallskip
\noindent However, the proof given in \cite{Rog01} of this statement
is flawed; in fact, two gaps occur in the argument. The first gap is
in the use of an unjustified claim concerning graphs of the sort
exemplified by Figure 3 of that paper. As this claim plays a crucial
role in the entire argument, the statement above remains unproven.

The second gap, though less sweeping in its impact, is still
significant. Assuming the unjustified graphical claim mentioned
above could be proved, the argument of \cite{Rog01} still uses an
assumption that the eigenvalues of the GTR rate matrix be distinct.
While this is true for \emph{generic} GTR parameters, there are
exceptions, including the well-known Jukes-Cantor and Kimura
2-parameter models \cite{Fel04}. Without substantial additional
arguments, the reasoning given in \cite{Rog01} cannot prove
identifiability in all cases.

Furthermore, bridging either of the gaps in \cite{Rog01} is not a
trivial matter. Though we suspect that Rogers' statement of
identifiability is correct, at least for generic parameters, we have
not been able to establish it by his methods. For further exposition
on the nature of the gaps, see the Appendix.

\medskip

In this paper, we consider only the GTR+$\Gamma$ model, but for
characters with any number $\kappa\ge 2$ states, where the case
$\kappa=4$ corresponds to DNA sequences. Our main result is the
following:

\begin{theorem}\label{thm:id}
The $\kappa$-state GTR+$\Gamma$ model is identifiable from the joint
distributions of triples of taxa for generic parameters on any tree
with $3$ or more taxa.
Moreover, when $\kappa=4$ the model is identifiable for all
parameters.
\end{theorem}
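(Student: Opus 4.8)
The plan is to reduce the whole statement to the analysis of a single $3$-taxon tree, and on that tree to prise the Gamma shape parameter apart from the branch lengths by a sign-count for exponential sums. For a time-reversible process the distribution at every vertex equals the stationary distribution $\pi$, so $\pi$ is read off from any one-taxon marginal. The joint distribution of a triple $\{x,y,z\}$ on the given tree is exactly the distribution produced by the induced tripod, whose three pendant edges have lengths equal to the distances from $x$, $y$, $z$ to their median vertex. It therefore suffices to show that each triple distribution determines $\pi$, the GTR rate matrix $Q$ (up to the overall rescaling one always normalizes away), the shape parameter $\alpha$, and the three induced pendant-edge lengths; global consistency of $\pi$, $Q$, $\alpha$ together with additivity of the pairwise distances $d(x,y)$ along the tree then recovers the topology and every branch length by standard distance arguments (for example, the four-point condition).

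\emph{Spectral data from a tripod.} On a tripod with pendant edges $a,b,c$ I would conjugate by $\Pi^{1/2}$, where $\Pi=\diag(\pi)$: reversibility makes $\widetilde Q=\Pi^{1/2}Q\Pi^{-1/2}$ symmetric, with spectral decomposition $\widetilde Q=\sum_m\lambda_m P_m$ over its distinct eigenvalues $0=\lambda_0>\lambda_1>\cdots$, where $P_m$ is the orthogonal projection onto the $\lambda_m$-eigenspace. Writing $h(x)=\Ef_r[e^{rx}]=(1-x/\alpha)^{-\alpha}$ for the Gamma-mixing transform, the $\Pi^{1/2}$-symmetrized marginal on $\{x,y\}$ is $\sum_m h((a+b)\lambda_m)P_m$; the numbers $h((a+b)\lambda_m)$ are distinct for distinct $\lambda_m$, so its spectral decomposition yields the projections $P_m$ and the values $h((a+b)\lambda_m)$. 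Symmetrizing the full triple distribution in all three slots produces an identity $G=\sum_{m,n,p}h(a\lambda_m+b\lambda_n+c\lambda_p)\,W^{mnp}$, in which the tensors $W^{mnp}$ are built only from $\pi$ and the (now known) $P_m$; contracting $G$ against $P_m\otimes P_n\otimes P_p$ isolates the single term $h(a\lambda_m+b\lambda_n+c\lambda_p)\,W^{mnp}$, so that value of $h$ is recovered whenever $W^{mnp}\neq 0$. In particular, for any nonzero eigenvalue $\lambda=\lambda_m$ with $W^{mmm}\neq 0$ we recover $h((a+b+c)\lambda)$ in addition to the pairwise values $h((a+b)\lambda)$, $h((a+c)\lambda)$, $h((b+c)\lambda)$.

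\emph{Identifying $\alpha$, and the generic case.} Now use $a+b+c=\frac{1}{2}[(a+b)+(a+c)+(b+c)]$. Let $\rho_0,\rho_1,\rho_2,\rho_3\in(0,1)$ be the recovered values of $h$ at $(a+b+c)\lambda$ and at the three pairwise sums times $\lambda$. The substitution $\xi=1/\alpha$ makes $h(t\lambda)^{-\xi}=1+t(-\lambda)\xi$ affine in $t$, so the relation to solve is $2\rho_0^{-\xi}-\rho_1^{-\xi}-\rho_2^{-\xi}-\rho_3^{-\xi}+1=0$. Its left-hand side is an exponential sum $\sum_i c_i e^{\xi w_i}$ with exponents obeying $0<w_1,w_2,w_3<w_0$ (because $a+b+c$ exceeds each pairwise sum) and coefficient sequence $(+1,-1,-1,-1,+2)$ in order of increasing exponent; this has only two sign changes, and an exponential sum has at most as many real zeros as its coefficient sequence has sign changes. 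Since $\xi=0$ and the true value $\xi=1/\alpha$ are both zeros, $\alpha$ is determined; then $\rho_1=h((a+b)\lambda)$ determines $(a+b)(-\lambda)$, and likewise all pendant-edge lengths and all eigenvalues of $Q$, up to the single global scaling. This establishes identifiability as soon as some nonzero eigenvalue satisfies $W^{mmm}\neq 0$; as $W^{mmm}$ is a fixed polynomial in the entries of $\pi$ and $Q$ that is not identically zero, this holds for generic parameters, proving the first assertion for all $n\ge 3$.

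\emph{The case $\kappa=4$: the main obstacle.} The remaining assertion --- that \emph{no} $4$-state GTR matrix escapes the argument --- is where the real work lies, since for $\kappa=4$ one must also handle the models with repeated eigenvalues (Jukes--Cantor, Kimura, Felsenstein's 1981 model) and those with symmetric stationary distribution, precisely the situations in which the clean argument degrades. The projection formulation above already survives coincident eigenvalues --- the distinct-eigenvalue eigenspaces are still recovered exactly --- so what remains is a finite case analysis over the eigenvalue-multiplicity patterns of a $4\times4$ reversible rate matrix: whenever $W^{mmm}$ happens to vanish for the needed $\lambda$, one must exhibit an alternative, either another nonzero eigenvalue with non-vanishing $W$, or a ``mixed'' recoverable quantity such as $h((a+b)\lambda_m+c\lambda_{m'})$ which still isolates $\alpha$ by the same sign-count. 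Verifying that for $\kappa=4$ one of these always applies --- for instance, in the F81-type case $W^{mmm}$ has the diagonal entries $(1-\pi_i)(1-2\pi_i)\pi_i^{-1/2}$, and among four positive $\pi_i$ summing to $1$ at most one can equal $\frac{1}{2}$ --- is the crux of the second statement; it is exactly what fails for $\kappa=2$ with uniform $\pi$, and is not automatic for $\kappa>4$.
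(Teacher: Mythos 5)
The generic half of your argument is essentially sound and runs parallel to the paper's: your $W^{mnp}$ are the paper's $\nu_{ijk}$, and your recovery of the transform $h$ at points $a\lambda_m+b\lambda_n+c\lambda_p$ from the tripod distribution is Proposition \ref{prop:triple}. The difference is in how uniqueness of $\beta=1/\alpha$ is extracted: the paper pairs two mixed values $L_\alpha(\lambda_it_a+\lambda_jt_b+\lambda_jt_c)$, $L_\alpha(\lambda_jt_a+\lambda_it_b+\lambda_jt_c)$ against the three pairwise values and proves a strict-convexity statement (Lemma \ref{lem:convex}); you use the diagonal value $h((a+b+c)\lambda)$ together with the identity $2(a+b+c)=(a+b)+(a+c)+(b+c)$ and a Descartes--Laguerre sign-change bound for exponential sums. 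Both give at most two real roots, one of which is $\beta=0$, so your version of the generic statement goes through (note your genericity condition $W^{mmm}\ne0$ is more restrictive than the paper's $\nu_{ijj}\ne0$ for some $1<i\le j$, so your exceptional set is larger, though still measure zero).

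The genuine gap is the second assertion, identifiability of \emph{all} $4$-state parameters: you explicitly stop at what you call ``the crux,'' and the sketch you give of it is misdirected in two ways. First, the exceptional set is not indexed by eigenvalue-multiplicity patterns: vanishing of the $W$'s is a condition on $\boldsymbol\pi$ and the eigenvectors, not the spectrum --- e.g.\ the Kimura $3$-parameter class has three distinct nonzero eigenvalues yet $W^{mmm}=0$ for every nonzero eigenvalue (and $\nu_{ijk}\ne0$ only for distinct $i,j,k>1$). What is actually needed is a classification of all exceptional $(\boldsymbol\pi,U)$, which the paper carries out in Lemma \ref{lem:except} (uniform $\boldsymbol\pi$ with a one-parameter Kimura-like family of $U$'s, or $\boldsymbol\pi=(1/8,1/8,1/4,1/2)$ with a specific $U$); with your stronger vanishing condition the set to be classified is even larger. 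Second, your fallback that a mixed value such as $h(a\lambda_m+b\lambda_n+c\lambda_p)$ ``still isolates $\alpha$ by the same sign-count'' is not automatic: once the three slots carry different eigenvalues, the mixed exponents need not dominate the pairwise ones, the coefficient sequence can have more than two sign changes, and uniqueness of the positive root no longer follows. The paper closes exactly this hole by proving the eigenvalue inequalities $\lambda_4>\lambda_2+\lambda_3$, resp.\ $\lambda_4>2\lambda_2$, from positivity of the off-diagonal entries of $Q$ in the exceptional cases (Lemma \ref{lem:Aineq}), and these inequalities are what force the orderings needed in Lemma \ref{lem:convex} (equivalently, keep your sign count at two). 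Without (i) the classification of exceptional $(\boldsymbol\pi,U)$ and (ii) such eigenvalue inequalities, the ``all parameters, $\kappa=4$'' half of the theorem remains unproven in your proposal.
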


The term `generic' here means for those GTR state distributions and
rate matrices which do not satisfy at least one of a collection of
equalities to be explicitly given in Theorem \ref{thm:genid}.
Consequently, the set of non-generic parameters is of Lebesgue
measure zero in the full parameter space.
Our arguments are quite different from those attempted in \cite{Rog01}.
We combine arguments from algebra, algebraic geometry and analysis.

\medskip

We believe this paper presents the first correct proof of
identifiability for any model with a continuous distribution $\mu$
of rates across sites that is not fully known. The
non-identifiability of some models with more freely-varying rate
distributions of rates across sites was established in \cite{SSH94}.
That paper also showed identifiability of rate-across-sites models
built upon certain group-based models provided the rate distribution
$\mu$ is completely known. More recently, \cite{ARidtree} proved
that tree topologies are identifiable for generic parameters in
rather general mixture models with a small number of classes. That
result specializes to give the identifiability of trees for the
$\kappa$-state GTR models with at most $\kappa-1$ rates-across-sites
classes, including the GTR+I model. Identifiability of numerical
model parameters for GTR+I is further explored in \cite{ARGMI}.
There have also been a number of recent works dealing with
non-identifiability of mixture models which are not of the
rates-across-sites type; these include
\cite{StefVig,StefVig1,MatSt,MatMoSt}.

\medskip

In Section \ref{sec:prelim} we define the GTR+$\Gamma$ model,
introduce notation, and reduce Theorem \ref{thm:id} to the case of a
3-taxon tree. In Section \ref{sec:alg}, we use purely algebraic
arguments to determine from a joint distribution certain useful
quantities defined in terms of the model parameters. In Section
\ref{sec:gen}, in the generic case of certain algebraic expressions
not vanishing, an analytic argument uses these quantities to
identify the model parameters. Focusing on the important case of
$\kappa=4$ for the remainder of the paper, in Section
\ref{sec:badmods} we completely characterize the exceptional cases
of parameters not covered by our generic argument. Using this
additional information, in Section \ref{sec:badid} we establish
identifiability for these cases as well. Finally, Section
\ref{sec:open} briefly mentions several problems concerning
identifiability of phylogenetic models that remain open.

\section{Preliminaries}\label{sec:prelim}

\subsection{The GTR+rates-across-sites substitution model}

The $\kappa$-state across-site rate-variation model is parameterized
by:

\begin{enumerate}
\item An unrooted topological tree $T$, with all internal vertices of valence $\ge 3$,
and with leaves labeled by $a_1,a_2,\dots, a_n$. These labels
represent taxa, and the tree their evolutionary relationships.

\item A collection of edge lengths $t_e\ge 0$,
where $e$ ranges over the edges of $T$. We require $t_e>0$ for all
internal edges of the tree, but allow $t_e\ge 0$ for pendant edges,
provided no two taxa are total-edge-length-distance 0 apart. Thus if
an edge $e$ is pendant, the label on its leaf may represent either
an ancestral ($t_e=0$) or non-ancestral ($t_e>0$) taxon.

\item A distribution vector $\boldsymbol
\pi=(\pi_1,\dots,\pi_\kappa)$ with $\pi_i>0$, $\sum \pi_i=1$,
representing the frequencies of states occurring in biological
sequences at all vertices of $T$.

\item A $\kappa\times\kappa$ matrix $Q=(q_{ij})$, with $q_{ij}>0$ for $i\ne j$
and $\sum_j q_{ij}=0$ for each $i$, such that
$\diag(\boldsymbol\pi)Q$ is symmetric. $Q$ represents the
instantaneous substitution rates between states in a reversible
Markov process. We will also assume some normalization of $Q$ has
been imposed, for instance that $\diag(\boldsymbol\pi)Q$ has trace
$-1$.

Note that the symmetry and row summation conditions imply that
$\boldsymbol \pi$ is a left eigenvector of $Q$ with eigenvalue 0,
which in turn implies $\boldsymbol \pi$ is stationary under the
continuous-time process defined by $Q$.

\item A distribution $\mu$, with non-negative support and expectation
$\EE{\mu}=1$, describing the distribution of rates among sites. If a
site has rate parameter $r$, then its instantaneous substitution
rates will be given by $rQ$.

\end{enumerate}

Letting $[\kappa]=\{1,2,\dots,\kappa\}$ denote the states, the joint
distribution of states at the leaves of the tree $T$ which arises
from a rate-across-sites GTR model is computed as follows. For each
rate $r$ and edge $e$ of the tree, let $M_{e,r}=\exp(t_erQ)$. Then
with an arbitrary vertex $\rho$ of $T$ chosen as a root, let
\begin{equation}
P_r(i_1,\dots,i_n)= \sum_{(h_v)\in H} \left (
\boldsymbol{\pi}(h_\rho) \prod_{e}M_{e,r}(h_{s(e)},h_{f(e)})\right
),\label{eq:Pdef}
\end{equation}
where the product is taken over all edges $e$ of $T$ directed away
from $\rho$, edge $e$ has initial vertex $s(e)$ and final vertex
$f(e)$, and the sum is taken over the set $$H=H_{i_1i_2\dots
i_n}=\left \{ (h_v)_{v\in\operatorname{Vert}(T)} ~|~ h_v\in [\kappa]
\text{ if $v\ne a_j$},\ h_v=i_j \text{ if $v=a_j$}\right \}\subset
[\kappa]^{|\operatorname{Vert}(T)|}.$$ Thus $H$ represents the set
of all `histories' consistent with the specified states
$i_1,\dots,i_n$ at the leaves, and the $n$-dimensional table $P_r$
gives the joint distribution of states at the leaves given a site
has rate parameter $r$. Since the Markov process is reversible and
stationary on $\boldsymbol \pi$, this distribution is independent of
the choice of root $\rho$.

Finally, the joint distribution for the GTR+$\mu$ model is given by
the $n$-dimensional table
$$P=\int_r P_r d\mu(r).$$
The distribution for the GTR+$\Gamma$ model is given by additionally
specifying a parameter $\alpha>0$, with $\mu$ then specialized to be
the $\Gamma$-distribution with shape parameter $\alpha$ and mean 1,
{\it i.e.}, with scale parameter $\beta=1/\alpha$.

\subsection{Diagonalization of $Q$}\label{ss:diag}
The reversibility assumptions on a GTR model imply that
$\diag({\boldsymbol\pi}^{1/2})Q\diag({\boldsymbol\pi}^{-1/2})$
is symmetric, and that $Q$
%The assumptions on the substitution matrix $Q$ ensure that it
can be represented as
$$Q=U\mathrm{diag}(0,\lambda_2,\lambda_3,\dots,\lambda_\kappa)U^{-1},$$
where the eigenvalues
of $Q$ satisfy
$0=\lambda_1>\lambda_2\geq\lambda_3\geq\dots\geq\lambda_\kappa$
%are all negative but the first one %\cite{MR0192521}
%
\cite{hornJohnson85},
and $U$ is a real matrix of associated
eigenvectors satisfying the equivalent statements
\begin{equation}
\label{eq:orth2} UU^T = \diag(\boldsymbol \pi)^{-1},\ \
U^T\diag(\boldsymbol \pi)U=I.
\end{equation}
Furthermore, the first column of $U$ may be taken to
be the vector $\one$.

While the $\lambda_i$ are uniquely determined by these
considerations, in the case that all $\lambda_i$ are distinct the
matrix $U$ is determined only up to multiplication of its individual
columns by $\pm1$. If the $\lambda_i$ are not distinct, eigenspaces
are uniquely determined but $U$ is not.

Our method of determining $Q$ from a joint distribution will proceed
by determining eigenspaces (via $U$) and the $\lambda_i$ separately. Although the
non-uniqueness of $U$ will not matter for our arguments, the
normalization  determined by equations (\ref{eq:orth2}) will be used
to simplify our presentation.

\subsection{Moment generating function}

We also use the moment generating function (\emph{i.e.}, essentially
the Laplace transform) of the density function for the distribution
of rates in our model. As our algebraic arguments will apply to
arbitrary rate distributions, while our analytic arguments are
focused on $\Gamma$ distributions, we introduce notation for the
moment generating functions in both settings.

\begin{definition} For any fixed distribution $\mu$ of rates $r$, let $$L(u)=L_\mu(u)=\EE{e^{ru}}
$$ for $-\infty<u\le 0$,
denote the expectation of $e^{ru}$. In the special case of
$\Gamma$-distributed rates, with parameters $\alpha>0$ and
$\beta=1/\alpha$, let
$$L_\alpha(u)=L_{\Gamma,\alpha}=\EE{e^{ru}}
= \left (1-\frac u\alpha \right)^{-\alpha}.$$
\end{definition}

Note that $L$, and in particular $L_\alpha$, is an increasing
function throughout its domain.

\subsection{Reduction to 3-taxon case}
To prove Theorem \ref{thm:id}, it is sufficient to consider only the
case of 3-taxon trees.

\begin{figure}[h]
\begin{center}
\includegraphics{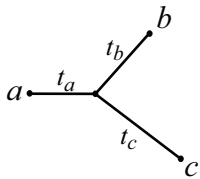}
\end{center}
\caption{The unique 3-taxon tree relating taxa $a$, $b$, and $c$,
with branch lengths $t_a$, $t_b$ and $t_c$.}\label{fig:triple}
\end{figure}

\begin{lemma}
If the statements of Theorem \ref{thm:id} holds for $3$-taxon trees,
then they also hold for $n$-taxon trees when $n>3$.
\end{lemma}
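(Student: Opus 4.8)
The plan is to reduce the $n$-taxon identifiability question to the $3$-taxon case by showing that the joint distribution $P$ on an $n$-taxon tree determines, and is determined by, the collection of all $3$-taxon marginal distributions together with the tree topology $T$. First I would recall that the tree topology $T$ for a GTR$+\Gamma$ model (indeed for any of the mixture models considered) is already known to be identifiable for generic parameters by \cite{ARidtree}, and in fact for $4$-state models the topology is recoverable from pairwise marginals via standard distance arguments; so I may assume $T$ is known. Thus the task becomes: given $T$ and all the $3$-taxon marginal distributions $P^{abc}$ obtained from $P$ by summing out the states at all leaves other than $a,b,c$, recover $\boldsymbol\pi$, $Q$, all edge lengths $t_e$, and the rate distribution parameter $\alpha$.

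The key steps, in order, are as follows. (1) Since the marginalization of a GTR$+\mu$ distribution over a subset of leaves is again a GTR$+\mu$ distribution on the induced subtree (with the same $\boldsymbol\pi$, the same $Q$, the same $\mu$, and edge lengths obtained by summing $t_e$ along suppressed paths), each $3$-taxon marginal $P^{abc}$ is the distribution of a GTR$+\Gamma$ model on the tripod tree of Figure~\ref{fig:triple}, with the same $\boldsymbol\pi,Q,\alpha$. (2) Fix any three leaves $a,b,c$. By the assumed $3$-taxon case of Theorem~\ref{thm:id}, $P^{abc}$ determines $\boldsymbol\pi$, $Q$ (with its normalization), $\alpha$, and the three tripod edge lengths $t_a',t_b',t_c'$, where $t_a'$ is the total length of the path in $T$ from $a$ to the ``center'' (the vertex of $T$ where the paths to $a$, $b$, $c$ meet), and similarly for $b,c$. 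In particular $\boldsymbol\pi$, $Q$, $\alpha$ are recovered globally from just one such triple. (3) It remains to recover each individual edge length $t_e$. For a pendant edge $e$ with leaf $a_i$, choose two further leaves $a_j,a_k$ whose paths to $a_i$ enter the edge $e$; the tripod center for $\{a_i,a_j,a_k\}$ then lies at the internal endpoint of $e$, so the recovered $t_{a_i}'$ equals $t_e$. For an internal edge $e=\{u,v\}$, pick leaves $a,b$ on the $u$-side and $c,d$ on the $v$-side (possible since internal vertices have valence $\ge 3$ and $n>3$); from the tripod distances for $\{a,b,c\}$ and $\{a,b,d\}$ one extracts the distance from $a$ to each of the two centers, whose difference is $t_e$. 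Doing this for every edge recovers all $t_e$, completing the reconstruction of the full parameter list, which is identifiability on $T$.

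I expect the main obstacle to be purely bookkeeping rather than conceptual: one must verify carefully that marginalization genuinely preserves the GTR$+\Gamma$ structure with the \emph{same} $\alpha$ and $Q$ (so that the $3$-taxon hypothesis applies), and that the several recovered tripod distances for overlapping triples are mutually consistent and can be combined by simple additions and subtractions to isolate each $t_e$ — being careful about the degenerate pendant edges allowed to have $t_e=0$, where one must check no two taxa are at distance $0$ so that the tripod metric is well defined. A secondary subtlety is the generic-versus-all-parameters distinction: in the generic case one simply invokes the generic $3$-taxon statement, while for $\kappa=4$ one invokes the all-parameters $3$-taxon statement, and in either case the genericity condition on $(\boldsymbol\pi,Q)$ is a property of those parameters alone, hence unaffected by passing to subtrees. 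Once these consistency checks are in place, the reduction is immediate.
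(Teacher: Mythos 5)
Your overall reduction is essentially the paper's: marginalize to triples, note that each $3$-taxon marginal is itself a GTR+$\Gamma$ distribution on the induced tripod with the same $\boldsymbol\pi$, $Q$, $\alpha$ and with summed edge lengths, apply the assumed $3$-taxon statement to recover $\boldsymbol\pi$, $Q$, $\alpha$ and the tripod edge lengths, and observe that the genericity condition concerns $(\boldsymbol\pi,Q)$ alone and hence transfers unchanged to induced subtrees. All of that matches the paper's argument.

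The one genuine weak point is your treatment of the tree topology. You assume $T$ is known at the outset by citing \cite{ARidtree}; but that result concerns mixtures with a small number of discrete rate classes (at most $\kappa-1$) and only generic parameters, so it does not apply to the continuous $\Gamma$ distribution and in any case could not deliver the all-parameters statement for $\kappa=4$. Your fallback remark that the topology is ``recoverable from pairwise marginals via standard distance arguments'' is also not correct as stated: a $2$-taxon marginal only determines the values $L_\alpha(\lambda_i d_{jk})$, which cannot be converted into distances until $\alpha$ and the eigenvalues are known. The repair --- and the paper's actual route --- is simply to reverse the order: a single $3$-taxon marginal already yields $\alpha$, $Q$, $\boldsymbol\pi$; the tripod edge lengths for the various triples (equivalently, applying $L_\alpha^{-1}$ to the $2$-taxon data) then give all pairwise distances $d_{jk}$; and the topological tree together with every edge length is recovered simultaneously from the distance matrix by standard tree-metric combinatorics, as in \cite{MR2060009}, for all parameters and with no external identifiability input. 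Your step (3), isolating each $t_e$ via tripod centers, is fine once $T$ is in hand and is just a hands-on version of that combinatorial step, so the gap is easily closed; but as written, the identification of the topology rests on an inapplicable citation rather than on the distances you have already recovered.
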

\begin{proof}
As the generic condition of Theorem 1 is a condition on $\boldsymbol
\pi$ and $Q$ (see Theorem \ref{thm:genid} below for a precise
statement), parameters on a $n$-taxon tree are generic if and only
if the induced parameters on all induced 3-taxon trees are generic.

If the model on 3-taxon trees is identifiable for certain
parameters, then from the joint distribution for a tree such as that
of Figure \ref{fig:triple}, we may determine $\alpha$, $Q$,
$\boldsymbol \pi$ and the 3 edge lengths $t_a,t_b,t_c$. Thus we may
determine the pairwise distances $t_a+t_b$, $t_a+t_c$, $t_b+t_c$
between the taxa. From an $n$-taxon distribution, by considering
marginalizations to 3 taxa we may thus determine $\alpha$, $Q$,
$\boldsymbol \pi$, and all pairwise distances between taxa. From all
pairwise distances, we may recover the topological tree and all edge
lengths by standard combinatorial arguments, as in \cite{MR2060009}.

\end{proof}

\section{Algebraic arguments}\label{sec:alg}

We now determine some information that we may obtain algebraically
from a joint distribution known to have arisen from the GTR+$\mu$
model on a tree $T$ relating 3 taxa. While in this paper we will
only apply the results to the GTR+$\Gamma$ model, we derive them at
their natural level of generality. We therefore denote the moment
generating function of the rate distribution by $L$, with its
dependence on $\mu$ left implicit.

As marginalizations of the joint distribution correspond to the
model on induced trees $T'$ with fewer taxa, we work with trees with
1, 2, or 3 leaves.

\medskip

If $T'$ has only 1 leaf, it is simply a single vertex, and the
distribution of states is therefore $\boldsymbol \pi$. Thus
$\boldsymbol \pi$ is identifiable from a joint distribution for 1 or
more taxa.

\medskip

If $T'$ has exactly 2 leaves, joined by an edge of length $t_e>0$,
then the joint distribution can be expressed as
$$P=\diag(\boldsymbol \pi)\EE{\exp(t_erQ)} =\diag(\boldsymbol
\pi)U\diag(L(\lambda_1 t_e),\dots,L(\lambda_\kappa t_e))U^{-1}.$$
Therefore, diagonalizing $\diag(\boldsymbol \pi)^{-1} P$ determines
the collection of $L(\lambda_i t_e)$ and the columns of $U$ up to
factors of $\pm 1$. Since $L$ is increasing, we may determine
individual $L(\lambda_i t_e)$ by the requirement that
\begin{equation}\label{eq:eigineq} 1=L(0)=L(\lambda_1
t_e)> L(\lambda_2 t_e)\ge \dots\ge L(\lambda_\kappa
t_e).\end{equation} When the $\lambda_i$ are distinct, this fixes an
ordering to the columns of $U$. Regardless, we simply make a fixed
choice of some $U$ consistent with the inequalities
(\ref{eq:eigineq}) and satisfying equations (\ref{eq:orth2}). We can
further require this choice of $U$ be made consistently for all
2-taxon marginalizations of the joint distribution. Thus for any
tree relating 2 or more taxa, we may determine the eigenspaces of
$Q$ via $U$ and the value $L(\lambda_i d_{jk})$ for each $i$ and
pair of taxa $a_j,a_k$, where $d_{jk}$ is the total edge-length
distance between $a_j$ and $a_k$.

\medskip

For $T$ with exactly 3 leaves, let  $a,b,c$ be the taxa labeling
them, with edge lengths as in Figure \ref{fig:triple}, and let $X_a,
X_b, X_c$ denote the character states at these taxa. As in
\cite{MR97k:92011}, denote by $P^{ab,\gamma}$ the square matrix
containing the probabilities
$$P^{ab,\gamma}(i,j) = \P{X_b=j,X_c=\gamma\given X_a=i},$$
which can be computed from the joint distribution. But
$$P^{ab,\gamma} = \EE{e^{rt_aQ}\diag\left({e^{rt_cQ}}_{\cdot
\gamma}\right)e^{rt_bQ}}
$$
where ${e^{rt_cQ}}_{\cdot \gamma}$ is the $\gamma^{th}$ column of
matrix $e^{rt_cQ}$, so
\begin{multline*}
U^{-1}P^{ab,\gamma}U =\\
\EE{\diag(e^{rt_a\lambda_1},\dots,e^{rt_a\lambda_\kappa})U^{-1}
  \diag\left({e^{rt_cQ}}_{\cdot \gamma}\right)U
  \diag(e^{rt_b\lambda_1},\dots,e^{rt_b\lambda_\kappa})}\;.
\end{multline*}
Note that the $j$th column of
$$\diag\left({e^{rt_cQ}}_{\cdot \gamma}\right)U$$ is the same as
the $\gamma$th column of
$$\diag\left(U_{\cdot j}\right)e^{rt_cQ}\;.$$
Thus when $(i,j)$ is fixed, the row vector formed by
$U^{-1}P^{ab,\gamma}U\,(i,j)$ for $\gamma=1,\dots,\kappa$ is
\begin{equation}\label{eq:step1}
\mu^{ij} \EE{e^{rt_a\lambda_i}e^{rt_b\lambda_j}e^{rt_cQ}}
\end{equation}
where $\mu^{ij}$ is the row vector with
\begin{equation}\label{eq:mu}
\mu^{ij}(k) = U^{-1}(i,k)U(k,j) = \pi(k)U(k,i)U(k,j)\,.
\end{equation}

Finally, multiplying \eqref{eq:step1} by $U$ on the right, and
setting $\nu^{ij} = \mu^{ij}U$, we see that the information brought
by the triple of taxa $\{a,b,c\}$ amounts to the knowledge of
$$\nu^{ij} \EE{e^{rt_a\lambda_i}e^{rt_b\lambda_j}
  \mbox{ diag}(e^{rt_c\lambda_1},\dots,e^{rt_c\lambda_\kappa})},$$
\emph{i.e.,} to the knowledge of each
$$\EE{e^{rt_a\lambda_i}e^{rt_b\lambda_j}e^{rt_c\lambda_k}}
=L(t_a\lambda_i+t_b\lambda_j+t_c\lambda_k)
$$
for which $\nu^{ij}(k)\neq 0$.

\smallskip

This motivates the following notation, where for conciseness we let
$U_{ij}=U(i,j)$: For $i,j,k\in[\kappa]$, let
$$\nu_{ijk} = \sum_l \pi_l U_{li}U_{lj}U_{lk}\,.$$
Note that while $\nu_{ijk}=\nu^{ij}(k)$, we prefer this new notation
since the value of $\nu_{ijk}$ is unchanged by permuting subscripts:
$$\nu_{ijk}=\nu_{ikj}=\nu_{jik}=\nu_{jki}=\nu_{kij}=\nu_{kji}.$$
Furthermore, since $\boldsymbol \pi$ can be determined from 1-taxon
marginalizations, and $U$ from 2-taxon marginalizations, from a
3-taxon distribution we may compute $\nu_{ijk}$ for all $i,j,k$.

In summary, we have shown the following:

\begin{proposition}\label{prop:triple}
From a distribution arising from the GTR+$\mu$ model on the
$3$-taxon tree of Figure \ref{fig:triple}, we may obtain the
following information:
\begin{enumerate}
\item $\boldsymbol \pi,$ from 1-marginalizations

\item \label{it:2} all matrices $U$ which diagonalize $Q$ as above, and for all $i$ the values
$$L(\lambda_i(t_a+t_b)),\
L(\lambda_i(t_a+t_c)),\ L(\lambda_i(t_b+t_c)),$$ from
2-marginalizations, and

\item \label{it:3} the values
$L(\lambda_i t_{a}+\lambda_j t_b +\lambda_k t_c)\text{ for all
$i,j,k$ such that $\nu_{ijk}\ne 0$}$ for some such choice of $U$.
\end{enumerate}
\end{proposition}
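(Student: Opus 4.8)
The plan is to establish the three claims in turn, using marginalizations of the joint distribution to successively more taxa. Claim (1) is immediate: a $1$-taxon marginalization of a distribution arising from the model is the distribution of states at a single vertex of $T$, which by construction is $\boldsymbol\pi$.

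For claim (2) I would fix any two of the three taxa, at total edge-length distance $d$. The induced $2$-taxon joint distribution is the $\kappa\times\kappa$ matrix $P=\diag(\boldsymbol\pi)\,\EE{\exp(drQ)}$, so, using the diagonalization $Q=U\diag(0,\lambda_2,\dots,\lambda_\kappa)U^{-1}$ of Section \ref{ss:diag} and the definition of $L$, one has $\diag(\boldsymbol\pi)^{-1}P=U\diag(L(\lambda_1 d),\dots,L(\lambda_\kappa d))U^{-1}$. This is an explicit eigendecomposition: the spectrum of $\diag(\boldsymbol\pi)^{-1}P$ is the multiset $\{L(\lambda_i d)\}$, and its eigenspaces are those of $Q$. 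Because $L$ is increasing with $L(0)=1$, the inequalities \eqref{eq:eigineq} identify each individual value $L(\lambda_i d)$ and recover the eigenspaces of $Q$, hence the whole family of matrices $U$ diagonalizing $Q$ and satisfying \eqref{eq:orth2}. I then fix one such $U$, consistent with \eqref{eq:eigineq}, using the same choice for every pair of taxa. Applied to the pairs $\{a,b\}$, $\{a,c\}$, $\{b,c\}$, with distances $t_a+t_b$, $t_a+t_c$, $t_b+t_c$, this yields exactly the data asserted in claim (2).

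For claim (3) I would form, for each leaf state $\gamma$, the conditional matrix $P^{ab,\gamma}$ with $(i,j)$-entry $\P{X_b=j,\,X_c=\gamma\given X_a=i}$; this is computable from the joint distribution. Rooting the $3$-taxon tree of Figure \ref{fig:triple} at its central vertex --- legitimate since the process is reversible and stationary on $\boldsymbol\pi$ --- gives $P^{ab,\gamma}=\EE{e^{rt_aQ}\diag\big((e^{rt_cQ})_{\cdot\gamma}\big)e^{rt_bQ}}$. Conjugating by the chosen $U$ diagonalizes the two outer factors, and the elementary observation that the $j$th column of $\diag\big((e^{rt_cQ})_{\cdot\gamma}\big)U$ equals the $\gamma$th column of $\diag(U_{\cdot j})e^{rt_cQ}$ shows that, for each fixed $(i,j)$, the row vector whose $\gamma$th entry is $U^{-1}P^{ab,\gamma}U\,(i,j)$ equals $\mu^{ij}\,\EE{e^{rt_a\lambda_i}e^{rt_b\lambda_j}e^{rt_cQ}}$ with $\mu^{ij}(k)=\pi_k U_{ki}U_{kj}$. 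Multiplying on the right by $U$ and reading off the $k$th coordinate gives $\nu_{ijk}\,L(t_a\lambda_i+t_b\lambda_j+t_c\lambda_k)$ where $\nu_{ijk}=\sum_l\pi_l U_{li}U_{lj}U_{lk}$. Since $\boldsymbol\pi$ and $U$ are already in hand, the scalars $\nu_{ijk}$ are known, so dividing whenever $\nu_{ijk}\neq0$ recovers $L(t_a\lambda_i+t_b\lambda_j+t_c\lambda_k)$, which is claim (3).

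The linear algebra here is routine and I expect no deep obstacle; the one point that must be handled carefully is the non-uniqueness of $U$, whose columns are determined only up to sign and, on each repeated eigenspace of $Q$, only up to an orthogonal change of basis. I would handle this by fixing one admissible $U$ once and for all, using it consistently across all $1$-, $2$-, and $3$-taxon marginalizations, and checking that the residual freedom does not affect the content of claim (3): a sign flip of a column of $U$ multiplies some $\nu_{ijk}$ by $-1$ but preserves their vanishing, and a rotation within an eigenspace on which $Q$ has a common eigenvalue $\lambda$ leaves every exponent $t_a\lambda_i+t_b\lambda_j+t_c\lambda_k$, and hence the set of recoverable values of $L$, unchanged. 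Getting this bookkeeping right --- rather than any single hard estimate --- is what keeps the arguments of the subsequent sections coherent.
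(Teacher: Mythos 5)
Your proposal is correct and follows essentially the same route as the paper: recover $\boldsymbol\pi$ from 1-marginalizations, diagonalize $\diag(\boldsymbol\pi)^{-1}P$ for each pair to get $U$ and the values $L(\lambda_i d)$ ordered via the monotonicity of $L$, then conjugate the conditional matrices $P^{ab,\gamma}$ by $U$ to extract $\nu_{ijk}\,L(\lambda_i t_a+\lambda_j t_b+\lambda_k t_c)$ and divide when $\nu_{ijk}\neq 0$. Your added bookkeeping about the sign and eigenspace ambiguity in $U$ matches the paper's "for some such choice of $U$" convention, so there is nothing further to fix.
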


Note that (\ref{it:2}) can be obtained as a special case of
(\ref{it:3}) by taking $j=i,$ $k=1$, as it is easy to see
$\nu_{ii1}\ne0$. We shall also see that $\nu_{ij1}=0$ if $i\ne j$,
so certainly some of the $\nu_{ijk}$ can vanish.

\medskip

One might expect that for most choices of GTR parameters all the
$\nu_{ijk}\ne 0$ for $i,j,k>1$. Indeed, this is generally the case,
but for certain choices one or more of these $\nu_{ijk}$ can vanish.
The Jukes-Cantor and Kimura 2- and 3-parameter models provide simple
examples of this for $\kappa$=4: For these models, one may choose
$$\boldsymbol \pi =(1/4,\,1/4,\,1/4,\,1/4),\ \ U=\begin{pmatrix}
1&\phantom{-}1&\phantom{-}1&\phantom{-}1\\1&-1&\phantom{-}1&-1\\
1&\phantom{-}1&-1&-1\\1&-1&-1&\phantom{-}1\end{pmatrix},$$ and
$\nu_{ijk}\ne 0$ for $i,j,k>1$ only when $i,j,k$ are distinct. While
for the Jukes-Cantor and Kimura 2-parameter models one may make
other choices for $U$, one can show that these alternative choices
of $U$ do not lead to the recovery of any additional information.

 Nonetheless, for $\kappa\ge 3$ there is always some
genuine 3-taxon information available from a distribution, as we now
show. Although we do not need the following proposition for the
proof of Theorem \ref{thm:id}, the method of argument it introduces
underlies Section \ref{sec:badmods} below.

\begin{proposition}\label{prop:non0}
With $\kappa\ge 3$, for any choice of GTR parameters there exists at
least one triple $i,j,k>1$ with $\nu_{ijk}\ne 0$.
\end{proposition}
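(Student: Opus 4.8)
The plan is to argue by contradiction, using the orthogonality relations \eqref{eq:orth2}, which say precisely that the columns $\one=U_{\cdot 1},U_{\cdot 2},\dots,U_{\cdot\kappa}$ of $U$ form an orthonormal basis of $\mathbb{R}^\kappa$ for the inner product $\langle x,y\rangle_{\boldsymbol\pi}=\sum_l \pi_l x_l y_l$. In this language $\nu_{ijk}=\langle U_{\cdot i}\odot U_{\cdot j},\,U_{\cdot k}\rangle_{\boldsymbol\pi}$, where $\odot$ denotes the entrywise (Hadamard) product of vectors. So the assertion to be proved is that for some pair $i,j\ge 2$ the vector $U_{\cdot i}\odot U_{\cdot j}$ fails to be $\boldsymbol\pi$-orthogonal to all of $U_{\cdot 2},\dots,U_{\cdot\kappa}$ simultaneously.

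First I would suppose, for contradiction, that $\nu_{ijk}=0$ for every triple $i,j,k\in\{2,\dots,\kappa\}$, and fix $i,j\ge 2$. Expanding $w:=U_{\cdot i}\odot U_{\cdot j}$ in the $\boldsymbol\pi$-orthonormal basis given by the columns of $U$, the coefficient on $U_{\cdot k}$ is $\langle w,U_{\cdot k}\rangle_{\boldsymbol\pi}=\nu_{ijk}$, which vanishes for every $k\ge 2$ by hypothesis, while the coefficient on $U_{\cdot 1}=\one$ is $\langle w,\one\rangle_{\boldsymbol\pi}=\sum_l\pi_l U_{li}U_{lj}=\langle U_{\cdot i},U_{\cdot j}\rangle_{\boldsymbol\pi}=\delta_{ij}$. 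Hence $U_{\cdot i}\odot U_{\cdot j}=\delta_{ij}\,\one$ for all $i,j\ge 2$.

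Reading this off entrywise completes the argument. Taking $i=j$ gives $U_{li}^2=1$ for all $l$ and all $i\ge 2$, so together with $U_{l1}=1$ every entry of $U$ lies in $\{+1,-1\}$. Taking $i\ne j$ — which is possible precisely because $\kappa\ge 3$ — gives $U_{li}U_{lj}=0$ for every $l$, impossible for a product of two numbers each equal to $\pm1$; this is the contradiction. I do not expect a genuine obstacle: the only point needing care is the use of the hypothesis $\kappa\ge 3$ to produce a pair of distinct indices $i,j\ge 2$, and this hypothesis really is needed, since for $\kappa=2$ the sole relevant triple is $(2,2,2)$ and $\nu_{222}$ vanishes for the $2$-state symmetric model. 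The same bookkeeping used here — tracking when a Hadamard product $U_{\cdot i}\odot U_{\cdot j}$ lies in the span of only a few columns of $U$, and the resulting rigidity forcing entries of $U$ into $\{\pm1\}$ — is the mechanism that underlies the classification of exceptional parameters in Section~\ref{sec:badmods}.
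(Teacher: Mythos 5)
Your proof is correct, and while it starts from the same framework as the paper --- the inner product $\langle x,y\rangle = x^T\diag(\boldsymbol\pi)\,y$, under which the columns of $U$ are orthonormal, and the identification of $\nu_{ijk}$ with $\langle U_{\cdot i}\odot U_{\cdot j},U_{\cdot k}\rangle$ --- the decisive step is genuinely different. The paper uses only the off-diagonal Hadamard products: from $\nu_{ijk}=0$ for $i>1$ and $j\ne k>1$, together with $\nu_{1jk}=\langle U_{\cdot j},U_{\cdot k}\rangle=0$, it concludes $U_{\cdot j}\odot U_{\cdot k}=\mathbf 0$, i.e.\ that the columns $U_{\cdot 2},\dots,U_{\cdot\kappa}$ have pairwise disjoint supports, and then reaches a contradiction by counting: each such column is nonzero and orthogonal to $\one$, hence has at least two nonzero entries, and there is not enough room in $\kappa$ coordinates (with $\kappa=3$ and $\kappa\ge 4$ handled slightly differently). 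You instead also expand the diagonal squares: $\nu_{iik}=0$ for $k\ge 2$ together with $\nu_{ii1}=1$ forces $U_{\cdot i}\odot U_{\cdot i}=\one$, so every entry of $U$ is $\pm 1$, after which a single off-diagonal relation $U_{li}U_{lj}=0$ (available exactly when $\kappa\ge 3$) is immediately contradictory. Your route is shorter, uniform in $\kappa\ge 3$, and, as you note, anticipates the rigidity bookkeeping of Lemma 3 in Section \ref{sec:badmods}; what the paper's counting argument buys in exchange is a marginally sharper conclusion, since it never invokes the all-equal triples $\nu_{iii}$ and therefore shows one can find a nonvanishing $\nu_{ijk}$ with $i,j,k>1$ not all equal, whereas your argument consumes the full hypothesis including $\nu_{iii}=0$.
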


\begin{proof}
Suppose for all triples $i,j,k>1$,
\begin{equation}\label{eq:o1}\nu_{ijk}=\sum_l \pi_l U_{li}U_{lj}U_{lk}= 0.
\end{equation}
From equation (\ref{eq:orth2}) we also know that if $j\ne k$,  then
\begin{equation}\label{eq:o2}\nu_{1jk}=\sum_l \pi_l U_{lj}U_{lk}= 0.
\end{equation}

Both of these equations can be expressed more conveniently by
introducing the inner product
$$\langle x,y \rangle=x^T\diag(\boldsymbol \pi)y.$$ Then with $U_i$
being the $i$th column of $U$, and $W_{jk}$ being the vector whose
$l$th entry is the product $U_{lj}U_{lk}$, equations (\ref{eq:o1})
give the orthogonality statements

$$\langle U_i,W_{jk} \rangle=0, \text{ if $i,j,k>1$, }$$
while equations (\ref{eq:o2}) yield both
\begin{align*}
\langle U_1,W_{jk}\rangle&=0, \text{ if $j\ne k$,  and}\\
\langle U_j,U_{k}\rangle&=0, \text{ if $j\ne k$.}
\end{align*}
In particular, we see for $j,k>1$, $j\ne k$, that $W_{jk}$ is
orthogonal to all $U_i$, and so $W_{jk}=\mathbf 0$. Considering
individual entries of $W_{jk}$ gives that, for every $l$,
\begin{equation}
\label{eq:0prod}U_{lj}U_{lk}=0, \text{for all $j,k>1$, $j\ne k$. }
\end{equation}

Now note that for any $j>1$, the vector $U_j$ must have at least 2
non-zero entries. (This is simply because $U_j$ is a non-zero
vector, and  $\langle \mathbf 1,U_j\rangle=0$ since $U_1=\one$.) We
use this observation, together with equation (\ref{eq:0prod}), to
arrive at a contradiction.

First, without loss of generality, assume the first two entries of
$U_2$ are non-zero. Then by equation (\ref{eq:0prod}) the first two
entries of all the vectors $U_3, U_4, \dots$ must be 0. But then we
may assume the third and fourth entries of $U_3$ are non-zero, and
so the first 4 entries of $U_4, \dots$ are zero. For the 4-state DNA
model, this shows $U_4=\mathbf 0$, which is impossible.

More generally, for a $\kappa$-state model, we find $U_k=\mathbf 0$
as soon as $2(k-2)\ge \kappa$. Note that for $\kappa\ge 4$ this
happens for some value of $k\le\kappa$, thus contradicting that the
$U_k$ are non-zero. In the $\kappa=3$ case the same argument gives
that $U_3$ has only one non-zero entry, which is still a
contradiction, since $U_3$ is orthogonal to $U_1=\one$. Thus the
lemma is established for a $\kappa$-state model with $\kappa\ge 3$.
\end{proof}

For $\kappa=2$, the statement of Proposition \ref{prop:non0} does
not hold, as is shown by considering the 2-state symmetric model,
with
$$\boldsymbol \pi=(1/2,\,1/2),\text{ and } U=\begin{pmatrix}1&\phantom{-}1\\1&-1
\end{pmatrix}.$$ However, one can show
this is the only choice of $\boldsymbol \pi$ and $U$ for which
$\nu_{222}=0$.

\section{Identifiability for generic parameters}\label{sec:gen}

We now complete the proof of the first statement in Theorem
\ref{thm:id}, the identifiability of the GTR+$\Gamma$ model for
generic parameters, which is valid for all values of $\kappa\ge 2$.
As we now consider only $\Gamma$-distributed rates, we use the
specialized moment generating function $L_\alpha$ in our arguments.

More precisely, we will establish the following:

\begin{theorem}\label{thm:genid} For $\kappa\ge 2$, consider those GTR
parameters for which there exist some $i,j$, with $1<i\le j$, such
that $\nu_{ijj}\ne 0$. Then restricted to these parameters, the
GTR+$\Gamma$ model is identifiable on $3$-taxon trees.
\end{theorem}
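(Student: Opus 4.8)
The plan is to use the three kinds of data listed in Proposition~\ref{prop:triple} to pin down the edge lengths $t_a,t_b,t_c$, the eigenvalues $\lambda_i$ of $Q$, and the shape parameter $\alpha$; once these are known, together with $\boldsymbol\pi$ and the matrix $U$ (already recovered from $1$- and $2$-marginalizations), the rate matrix $Q$ is determined, so the whole model is identified. The starting point is the hypothesis: we are given some pair $1<i\le j$ with $\nu_{ijj}\ne 0$. Then by part~(\ref{it:3}) of Proposition~\ref{prop:triple} we know the three numbers $L_\alpha(\lambda_i t_a+\lambda_j t_b+\lambda_j t_c)$, $L_\alpha(\lambda_j t_a+\lambda_i t_b+\lambda_j t_c)$, and $L_\alpha(\lambda_j t_a+\lambda_j t_b+\lambda_i t_c)$ (using $\nu_{ijj}=\nu_{jij}=\nu_{jji}\ne 0$ by the symmetry of the subscripts), and by part~(\ref{it:2}) we know $L_\alpha(\lambda_j(t_a+t_b))$, $L_\alpha(\lambda_j(t_a+t_c))$, $L_\alpha(\lambda_j(t_b+t_c))$ and similarly with $\lambda_i$. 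Since $L_\alpha$ is strictly increasing hence invertible on its domain, knowing $L_\alpha(x)$ is the same as knowing $x$ \emph{provided we know $\alpha$}; the real content is to extract $\alpha$ first.

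First I would reduce everything to statements about the arguments of $L_\alpha$ by exploiting that, for fixed $\alpha$, $L_\alpha$ is a known invertible function. Write $L=L_\alpha$. From the six ``$2$-marginalization'' values at eigenvalue $\lambda_j$ and $\lambda_i$ we can form ratios or differences that, after applying $L^{-1}$, give $\lambda_j(t_a+t_b)$ etc.\ and $\lambda_i(t_a+t_b)$ etc.; from these one recovers the three pairwise quantities $\lambda_j t_a, \lambda_j t_b, \lambda_j t_c$ (solve the linear system: $\lambda_j t_a = \tfrac12[\lambda_j(t_a+t_b)+\lambda_j(t_a+t_c)-\lambda_j(t_b+t_c)]$, and similarly for $b,c$), and likewise $\lambda_i t_a,\lambda_i t_b,\lambda_i t_c$. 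In particular the ratio $\lambda_i/\lambda_j$ is determined, as are all products $\lambda_i t_e$, $\lambda_j t_e$. The point of bringing in the genuinely $3$-dimensional data from (\ref{it:3}) is that the argument $\lambda_i t_a+\lambda_j t_b+\lambda_j t_c$ is \emph{not} a nonnegative combination of the pairwise arguments in a way that is automatically consistent: once $\alpha$ is guessed, applying $L^{-1}$ to the known value $L(\lambda_i t_a+\lambda_j t_b+\lambda_j t_c)$ yields a number that must equal the already-determined value $(\lambda_i t_a)+(\lambda_j t_b)+(\lambda_j t_c)$. This gives an \emph{equation} constraining $\alpha$. The strategy is to show this equation (or the system of three such equations, one for each cyclic placement of the index $i$) has $\alpha$ as its only solution.

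The main obstacle is exactly this last step: proving that the constraint equation for $\alpha$ has a unique solution. Concretely, set $s_e=\lambda_j t_e\ge 0$ and $\rho=\lambda_i/\lambda_j\in(0,1]$ (both known, independent of the guessed shape parameter), and for a trial shape parameter $\tilde\alpha>0$ define $f(\tilde\alpha) = L_{\tilde\alpha}^{-1}\bigl(L_\alpha(\rho s_a+s_b+s_c)\bigr) - \bigl(\rho' s_a + s_b' + s_c'\bigr)$, where the primed quantities are what the pairwise data forces under the trial value $\tilde\alpha$; we must show $f$ vanishes only at $\tilde\alpha=\alpha$. I expect to handle this by an explicit monotonicity/convexity analysis of the family $u\mapsto L_\alpha(u)=(1-u/\alpha)^{-\alpha}$ in the parameter $\alpha$: for $u<0$, $L_\alpha(u)$ is monotone in $\alpha$, and more importantly the way $L_\alpha^{-1}$ distorts a fixed target value as $\alpha$ varies is strictly monotone, so the three equations overdetermine $\alpha$ and force uniqueness. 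A clean way to organize this: take logarithms and compare the function $\alpha\mapsto -\alpha\log(1-u/\alpha)$ for the several relevant values of $u$; show that matching the data at two incommensurable arguments $u_1,u_2$ (which the hypothesis $\nu_{ijj}\ne0$, via $\lambda_i\ne 0$ and positivity of edge lengths, guarantees are genuinely distinct and not proportional in the required sense) is impossible for $\tilde\alpha\ne\alpha$ because the ratio of the corresponding cumulant-type expressions is a strictly monotone function of the shape parameter. Once $\alpha$ is identified, $L_\alpha^{-1}$ is a known function, so all the arguments $\lambda_i t_a+\lambda_j t_b+\lambda_k t_c$ for which $\nu_{ijk}\ne 0$ become known numbers; in particular all $\lambda_i t_e$ are known, and since (after fixing the normalization of $Q$, e.g.\ $\operatorname{trace}(\diag(\boldsymbol\pi)Q)=-1$) the scale of the $\lambda_i$ is pinned down, we recover each $\lambda_i$ and each $t_e$ separately, hence $Q=U\diag(0,\lambda_2,\dots,\lambda_\kappa)U^{-1}$, completing the identification.
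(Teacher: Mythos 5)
Your overall strategy is the same as the paper's: use the $3$-taxon quantities made available by $\nu_{ijj}\ne 0$ together with the pairwise quantities, exploit the linear relation among their arguments to get one equation constraining the shape parameter, prove that equation has a unique solution, and then recover eigenvalue ratios, the normalization-fixed $Q$, and the edge lengths. However, there is a genuine gap exactly at the step that carries all the weight: you never prove uniqueness of the solution for $\alpha$. Your proposal says you ``expect to handle this by an explicit monotonicity/convexity analysis'' and appeals to a ``strictly monotone'' ratio of ``cumulant-type expressions,'' but no such monotonicity statement is formulated or established, and it is not obvious; indeed, for $\kappa=2$ with only the pairwise data the analogous system is genuinely non-identifiable (see the paper's remark after Theorem~\ref{thm:genid}), so uniqueness really does hinge on a careful argument. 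The paper's resolution is algebraic and specific to the Gamma family: since $L_\alpha(u)^{-\beta}=1-\beta u$ with $\beta=1/\alpha$, the linear relation
$$(\lambda_i t_a+\lambda_j t_b+\lambda_j t_c)+(\lambda_j t_a+\lambda_i t_b+\lambda_j t_c)=\lambda_j(t_b+t_c)+\lambda_j(t_a+t_c)+\lambda_i(t_a+t_b)$$
becomes $D_{ijj}^{-\beta}+D_{jij}^{-\beta}-A_j^{-\beta}-B_j^{-\beta}-C_i^{-\beta}+1=0$, an equation in $\beta$ alone whose left side is shown (Lemma~\ref{lem:convex}) to be strictly convex after suitable rewriting, with a root at $\beta=0$, hence at most one positive root. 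Verifying the hypotheses of that lemma is where the WLOG ordering $0\le t_a\le t_b\le t_c$ with $t_b>0$ (from the no-zero-distance assumption) and $\lambda_j\le\lambda_i<0$ enter, producing the needed inequalities $C_i\ge A_j\ge D_{ijj}$ and $C_i\ge B_j>D_{jij}$; your sketch contains no analogue of these strictness conditions, without which even a convexity argument would not close.

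A secondary but real error in your setup: you assert that $s_e=\lambda_j t_e$ and $\rho=\lambda_i/\lambda_j$ are ``known, independent of the guessed shape parameter.'' They are not. The data give only $L_\alpha(\lambda_k d)$ for various distances $d$, and extracting the arguments (hence the ratio $\lambda_i/\lambda_j$ or any $\lambda_k t_e$) requires applying $L_\alpha^{-1}$, i.e.\ already knowing $\alpha$; under a wrong trial $\tilde\alpha$ all of these quantities change. You partially acknowledge this with the ``primed'' quantities, but then both sides of your constraint $f(\tilde\alpha)=0$ vary with $\tilde\alpha$, which makes the unproven monotonicity claim even less straightforward. (Also note $\lambda_j t_e\le 0$, not $\ge 0$.) The concluding recovery of $Q$ and the edge lengths once $\alpha$ is known is fine and matches the paper.
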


\begin{remark}
Note that the conditions $\nu_{ijj}=0$ are polynomial in the entries
of $U$ and $\boldsymbol \pi$. Viewing the GTR model as parameterized
by those variables together with the $\lambda_i$, then the set of
points in parameter space for which $\nu_{ijj}=0$ for some $i,j$
with $1<i\le j$ forms a proper algebraic variety. Basic facts of
algebraic geometry then implies this set is of strictly lower
dimension than the full parameter space. A generic point in
parameter space therefore lies off this exceptional variety,  and
the exceptional points have Lebesgue measure zero in the full
parameter space.
\end{remark}

\begin{remark}
For $\kappa=2$, identifiability does not hold for the 3-taxon tree
if the generic condition that $\nu_{ijj}\ne 0$ for some $1<i\le j$
is dropped. Indeed, if $\nu_{222}=0$, then, as commented in the last
section, $\boldsymbol \pi$ and $U$ arise from the 2-state symmetric
model. Since there are only two eigenvalues of $Q$, $\lambda_1=0$
and $\lambda_2<0$, the second of these is determined by the
normalization of $Q$. As the proof of Proposition \ref{prop:triple}
indicates, the only additional information we may obtain from the
joint distribution is the three quantities
$$L_\alpha(\lambda_2(t_a+t_b)),\
L_\alpha(\lambda_2(t_a+t_c)),\ L_\alpha(\lambda_2(t_b+t_c)).$$ Since
these depend on four unknown parameters $\alpha,t_a,t_b,t_c$, it is
straightforward to see the parameter values are not uniquely
determined.
\end{remark}

\smallskip

Our proof of Theorem \ref{thm:genid} will depend on the following
technical lemma.

\begin{lemma}\label{lem:convex}
Suppose $c\ge a\ge d_1> 0$ and $c\ge b>d_2>0$. Then the equation
$$d_1^{-\beta}+d_2^{-\beta}-a^{-\beta}-b^{-\beta}-c^{-\beta}+1=0.$$
has at most one solution with $\beta>0.$
\end{lemma}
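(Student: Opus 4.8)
The plan is to study the function $f(\beta) = d_1^{-\beta}+d_2^{-\beta}-a^{-\beta}-b^{-\beta}-c^{-\beta}+1$ on $(0,\infty)$ and show it has at most one zero. The most natural tool is to count sign changes among the exponentials. Writing $g(\beta)=d_1^{-\beta}+d_2^{-\beta}+1$ and $h(\beta)=a^{-\beta}+b^{-\beta}+c^{-\beta}$, so that $f=g-h$, I would like to argue that $f$ crosses zero at most once. The key structural fact is that $f$ is a generalized polynomial (a real linear combination of exponentials $x\mapsto e^{\beta x}$ with distinct "frequencies" $x = -\log d_1, -\log d_2, 0, -\log a, -\log b, -\log c$), and such functions obey a Descartes-type rule: the number of real zeros is at most the number of sign changes in the sequence of coefficients when the frequencies are listed in increasing order. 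Here the coefficients are all $\pm 1$.

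So the first step is to order the six frequencies. Using the hypotheses $c\ge a\ge d_1>0$ and $c\ge b>d_2>0$, we get $-\log c \le -\log a \le -\log d_1$ and $-\log c \le -\log b$, $-\log d_2 > -\log b \ge -\log c$. The frequency $0$ and the frequency $-\log d_2$ are not fully pinned down relative to the others (e.g. $d_1$ or $d_2$ could exceed or be less than $1$), so I would split into the few cases determined by where $0$ and $-\log d_2$ fall in the ordering. In each case I read off the sign sequence of coefficients $(+,+,+)$ for $\{d_1,d_2,1\}$-terms interleaved with $(-,-,-)$ for $\{a,b,c\}$-terms. The claim to verify is that no matter how the interleaving goes (subject to the constraints above), the number of sign changes is at most... well, this is where care is needed — a naive count can give more than one sign change, so the bound from Descartes alone may not suffice, and one must use the additional constraint that $f(0^+)$ and $f(\infty)$ have controlled signs (indeed $f\to 1$ as $\beta\to\infty$ if $d_1,d_2<1$... but this need not hold), and that consecutive equal signs among the $\pm1$ coefficients do not create crossings.

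The cleaner route, which I expect to be the one that actually works, is to divide through by $c^{-\beta}$ and set $r_1 = d_1/c \le 1$, $r_2 = d_2/c \le 1$, $s_1 = a/c \le 1$, $s_2 = b/c \le 1$, and $t = 1/c$, obtaining $\phi(\beta) = r_1^{-\beta}+r_2^{-\beta} - s_1^{-\beta} - s_2^{-\beta} - 1 + t^{-\beta} = 0$; each $r_i^{-\beta}, s_i^{-\beta}$ is $\ge 1$ and increasing, and since $d_1\le a$, $d_2\le b$ we have $r_i^{-\beta}\ge$ the corresponding... no — $d_1\le a$ gives $r_1 = d_1/c \le a/c = s_1$, so $r_1^{-\beta}\ge s_1^{-\beta}$, i.e. the positive terms dominate pairwise. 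Thus $\phi(\beta) = (r_1^{-\beta}-s_1^{-\beta}) + (r_2^{-\beta}-s_2^{-\beta}) + (t^{-\beta}-1)$, a sum of three nonnegative increasing terms — but then $\phi$ would be nonnegative and increasing, forcing \emph{at most one zero trivially (in fact zero or a zero only at an endpoint)}! The hard part, and the reason the lemma is stated as "at most one" rather than "none," must be that strict inequality $b>d_2$ is being used to handle a degenerate alignment; I would examine whether the $r_2^{-\beta}-s_2^{-\beta}$ term can be negative — it cannot since $d_2 < b$ forces $r_2 < s_2$ hence $r_2^{-\beta} > s_2^{-\beta}$. So in fact I expect $\phi$ to be strictly increasing, $\phi(0)= 0$, giving exactly the conclusion. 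Let me reconsider: $\phi(0) = 1 + 1 - 1 - 1 - 1 + 1 = 0$ always, so $\beta = 0$ is always a root but is excluded; the claim is there is no \emph{second} root $\beta>0$, which follows immediately once $\phi$ is shown strictly increasing on $[0,\infty)$, i.e. $\phi'(0^+)\ge 0$ and $\phi$ convex or $\phi'>0$ throughout. The main obstacle is therefore simply verifying $\phi'(\beta)>0$ for $\beta>0$: $\phi'(\beta) = -\log r_1\, r_1^{-\beta} - \log r_2\, r_2^{-\beta} + \log s_1\, s_1^{-\beta} + \log s_2\, s_2^{-\beta} + \log t\, t^{-\beta}$, and since $-\log r_i \ge -\log s_i \ge 0$ and $r_i^{-\beta}\ge s_i^{-\beta}\ge 1$, each pair $-\log r_i\, r_i^{-\beta} + \log s_i\, s_i^{-\beta} \ge 0$, while $\log t \cdot t^{-\beta}$ could be negative if $t>1$, i.e. $c<1$. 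That residual sign issue — balancing a possibly-negative $\log(1/c)$ term against the pairwise-positive contributions — is the one genuine computation, and I would resolve it by grouping $\log t\, t^{-\beta}$ with whichever pairwise term has slack, using that $\EE{\mu}=1$-type normalizations are not available here but the strict inequality $b>d_2$ gives a quantitative gap. That completes the plan.
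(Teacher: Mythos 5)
Your decomposition after multiplying through by $c^\beta$ is exactly the one the paper uses: the equation becomes
$$\bigl((c/d_1)^\beta-(c/a)^\beta\bigr)+\bigl((c/d_2)^\beta-(c/b)^\beta\bigr)+\bigl(c^\beta-1\bigr)=0,$$
which is your $\phi$. But the property you try to extract from it — that $\phi$ is strictly increasing on $[0,\infty)$ — is false in general, and you never actually close the one case you flag as problematic. When $c<1$ the last term $c^\beta-1$ is negative and decreasing, and its derivative at $0^+$ is $\log c$, which can be arbitrarily negative, while the "slack" from the two bracketed terms is only $\log(a/d_1)+\log(b/d_2)$, which the hypotheses allow to be arbitrarily small (e.g.\ $a=d_1$ and $b/d_2$ close to $1$). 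So $\phi'(0^+)=\log(a/d_1)+\log(b/d_2)+\log c$ can be negative and $\phi$ dips below zero before eventually tending to $+\infty$; no regrouping using the gap $b>d_2$ can repair this, since that gap is not quantitatively tied to $|\log c|$. Note also that the case $c<1$ is not a corner case but the relevant one: in the application, $a,b,c,d_1,d_2$ are values of $L_\alpha$ at negative arguments, hence all less than $1$, and the true $\beta$ \emph{is} a positive root of the equation — so if $\phi$ were strictly increasing with $\phi(0)=0$ there would be no positive root at all, contradicting the very situation the lemma is designed for. That is why the lemma says ``at most one'' rather than ``none.''

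The fix is one word away from what you wrote: the correct property of your three grouped terms is convexity, not monotonicity. Each bracket is of the form $r^\beta-s^\beta$ with $r\ge s\ge1$, hence convex in $\beta\ge0$, and the middle bracket is strictly convex because $c/d_2>c/b\ge1$; the term $c^\beta-1$ is convex for any $c>0$. Therefore $\phi$ is strictly convex, so it has at most two real zeros; since $\beta=0$ is always one of them, there is at most one zero with $\beta>0$. This is precisely the paper's argument. Your earlier Descartes-type idea also cannot be pushed through as stated, since (as you noticed) some admissible orderings of the frequencies produce more than two sign changes; the convexity count is what replaces it.
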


\begin{proof} The equation can be rewritten as
\begin{equation}\label{eq:rewrite}
\left ( \left (\frac{c}{d_1}\right)^\beta -\left (\frac {c}{a}\right
)^\beta \right ) + \left ( \left (\frac{c}{d_2}\right)^\beta -\left
( \frac {c}{b}\right )^\beta \right ) +\left (c^\beta -1\right ) =0
\end{equation}

Now a function $g(\beta)=r^\beta-s^\beta$ is strictly convex on
$\beta\ge 0$  provided $r> s\ge 1$, since $g''(\beta)>0$. If $r=s$,
then $g(\beta)=0$ is still convex. Thus when viewed as a function of
$\beta$ the first expression on the left side of equation
(\ref{eq:rewrite}) is convex, and the second expression is strictly
convex. Also, for any $r>0$ the function $h(\beta)=r^{\beta}-1$ is
convex, so the third expression in equation (\ref{eq:rewrite}) is
convex as well. Thus the sum of these three terms, the left side of
equation (\ref{eq:rewrite}), is a strictly convex function of
$\beta$.

But a strictly convex function of one variable can have at most two
zeros. Since the function defined by the left side of equation
(\ref{eq:rewrite}) has one zero at $\beta=0$, it therefore can have
at most one zero with $\beta>0$.
\end{proof}

\begin{proof}[Proof of Theorem \ref{thm:genid}]
For some $j\ge i>1$, we are given that $\nu_{ijj}\ne 0$. As
$\nu_{ijj}=\nu_{jij}$, by Proposition \ref{prop:triple} we may
determine the values
\begin{align*}
D_{ijj}&=L_\alpha(\lambda_i t_a+\lambda_j t_b+\lambda_j t_c),\\
D_{jij}&=L_\alpha(\lambda_j t_a+\lambda_i t_b+\lambda_j t_c),
\end{align*}
as well as
\begin{align*}
C_k&=L_\alpha(\lambda_k(t_a+t_b)),\\
B_k&=L_\alpha(\lambda_k(t_a+t_c)),\\
A_k&=L_\alpha(\lambda_k(t_b+t_c))
\end{align*} for $k=1,\dots,\kappa$.

Since $L_\alpha$ is increasing, for any $k>1$ we can use the values
of $C_k,B_k$ to determine which of $t_b$ and $t_c$ is larger.
Proceeding similarly, we may determine the relative ranking of
$t_a$, $t_b$, and $t_c$. Without loss of generality, we therefore
assume
$$0\le t_a\le t_b\le t_c$$
for the remainder of this proof. Note however that if $t_a=0$, then
$t_b>0$, by our assumption on model parameters that no two taxa be
total-edge-length-distance 0 apart.

\smallskip

Observe that $$L_\alpha^{-1} (D_{ijj})+L_\alpha^{-1}(D_{jij})=
L_\alpha^{-1}(A_j) + L_\alpha^{-1}(B_j)+ L_\alpha^{-1}(C_i),$$ or,
using the formula for $L_\alpha$ and letting $\beta=1/\alpha $,
\begin{equation}\label{eq:ddabc}
D_{ijj}^{-\beta}+ D_{jij}^{-\beta}- A_j^{-\beta} -B_j^{-\beta}-
C_i^{-\beta}+1=0.
\end{equation}

Since $j\ge i>1$, we have that $\lambda_j\le \lambda_i<0.$ Because
$L_\alpha$ is an increasing function, and $0\le t_a\le t_b\le t_c,$
with $t_b >0$, this implies
\begin{align*}
&C_i\ge A_j\ge D_{ijj},\text{ and}\\
&C_i\ge B_j> D_{jij}.
\end{align*}
Thus applying Lemma \ref{lem:convex} to equation (\ref{eq:ddabc}),
with
$$a=A_j,\ b=B_j,\ c=C_i,\ d_1=D_{ijj},\ d_2=D_{jij},$$ we find
$\beta$ is uniquely determined, so $\alpha=1/\beta$ is identifiable.

Once $\alpha$ is known, for every $k$ we may determine the
quantities
\begin{align*}
\lambda_k(t_a+t_b)=L_\alpha^{-1}(C_k),\\
\lambda_k(t_a+t_c)=L_\alpha^{-1}(B_k),\\
\lambda_k(t_b+t_c)=L_\alpha^{-1}(A_k).
\end{align*}
Thus we may determine the ratio between any two eigenvalues
$\lambda_k$. As $U$ is known, this determines $Q$ up to scaling.
Since we have required a normalization of $Q$, this means $Q$ is
identifiable. With the $\lambda_k$ now determined, we can find
$t_a+t_b$, $t_a+t_c$ and $t_b+t_c$, and hence $t_a,t_b,t_c$.
\end{proof}

\section{Exceptional cases ($\kappa=4$)}\label{sec:badmods}

In the previous section, identifiability was proved under the
assumption that $\nu_{ijj}\neq 0$ for some $j\ge i>1$. We now
specialize to the case of $\kappa=4$, and determine those GTR
parameters for which none of these conditions holds. In the
subsequent section, we will use this information to argue that even
in these exceptional cases the GTR+$\Gamma$ model is identifiable.

Note that while we work only with a 4-state model appropriate to
DNA, the approach we use may well apply for larger $\kappa$, though
one should expect additional exceptional subcases to appear.

\medskip
\begin{lemma}\label{lem:except}
For $\kappa=4$, consider a choice of GTR parameters for which
$\nu_{ijj}=0$ for all $j\ge i>1$. Then, up to permutation of the
states and multiplication of some columns of $U$ by $-1$, the
distribution vector $\boldsymbol \pi$ and eigenvector matrix $U$
satisfy one of the two following sets of conditions:

\smallskip

\textbf{Case A:} $\boldsymbol \pi=(1/4,\,1/4,\,1/4,\,1/4)$, and for
some $b,c\ge 0$ with $b^2+c^2=2$,
$$U=\begin{pmatrix}
1&\phantom{-}c&\phantom{-}b&\phantom{-}1\\1&-c&-b&\phantom{-}1\\
1&-b&\phantom{-}c&-1\\1&\phantom{-}b&-c&-1
\end{pmatrix}$$

\smallskip

\textbf{Case B:} $\boldsymbol \pi=(1/8,\,1/8,\,1/4,\,1/2)$, and
$$U=\begin{pmatrix}
1&\phantom{-}2&\phantom{-}\sqrt2&\phantom{-}1\\1&-2&\phantom{-}\sqrt2&\phantom{-}1\\
1&\phantom{-}0&-\sqrt2&\phantom{-}1\\1&\phantom{-}0&\phantom{-}0&-1
\end{pmatrix}$$

\end{lemma}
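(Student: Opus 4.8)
The plan is to exploit the defining relations $\nu_{ijj}=0$ for all $j\ge i>1$, together with the orthogonality relations $\langle U_i,U_j\rangle=0$ and $\langle\one,U_j\rangle=0$ that hold for any $U$ satisfying (\ref{eq:orth2}). As in the proof of Proposition \ref{prop:non0}, I would package these into statements about the inner product $\langle x,y\rangle=x^T\diag(\boldsymbol\pi)y$ and the Hadamard-square vectors $W_{jk}$ whose $l$th entry is $U_{lj}U_{lk}$. The key point is that now we are told something strictly weaker than in Proposition \ref{prop:non0} (only $\nu_{ijj}=0$, not $\nu_{ijk}=0$ for all triples), so $W_{jj}$ need not vanish; instead $\langle U_i,W_{jj}\rangle=0$ for $i,j>1$ forces $W_{jj}$ to lie in the span of $U_1=\one$, i.e.\ $W_{jj}$ is a constant vector for each $j\in\{2,3,4\}$. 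Thus $U_{2j}^2=U_{3j}^2=\cdots$ for each $j>1$; equivalently, in each nontrivial column of $U$ all entries have the same absolute value. Combined with $\langle\one,U_j\rangle=\sum_l\pi_l U_{lj}=0$, this already constrains things heavily: the signs in each column must be chosen so that the $\pi$-weighted sum cancels among entries of equal magnitude.

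Next I would extract information from $\nu_{1jj}=0$ for $j\ne 1$ giving $\langle U_j,U_j\rangle$... wait, that one is automatically $1$; the useful orthogonality is $\langle U_j,U_k\rangle=0$ for $j\ne k$, both in $\{2,3,4\}$. Writing $U_{lj}=\epsilon_{lj}\,s_j$ with $s_j=|U_{lj}|>0$ constant in $l$ and $\epsilon_{lj}=\pm1$, orthogonality becomes $\sum_l \pi_l \epsilon_{lj}\epsilon_{lk}=0$, i.e.\ the sign vectors $(\epsilon_{lj})_l$ are $\pi$-orthogonal, and $\langle\one,U_j\rangle=0$ says each sign vector is $\pi$-orthogonal to the all-ones vector. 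So the problem reduces to: classify, up to permutation of coordinates and global sign flips, all triples of $\pm1$-vectors in $\mathbb R^4$ that are mutually $\diag(\boldsymbol\pi)$-orthogonal and each $\diag(\boldsymbol\pi)$-orthogonal to $\one$, where $\boldsymbol\pi$ is an unknown positive probability vector — and simultaneously solve for the allowed $\boldsymbol\pi$. The normalization $UU^T=\diag(\boldsymbol\pi)^{-1}$, applied to a diagonal entry, also gives $1+\sum_{j>1}s_j^2 = 1/\pi_l$ for every $l$; since the left side is independent of $l$, this already forces $\boldsymbol\pi$ to be uniform unless... no: it forces $\pi_l$ constant, hence $\boldsymbol\pi=(1/4,1/4,1/4,1/4)$! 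That would give only Case A. So Case B must come from the possibility that $U$ does \emph{not} have all its nontrivial columns with constant-magnitude entries — which happens precisely when some $W_{jj}=\mathbf 0$ is impossible but the span argument degenerates because some $U_i$ itself has a zero entry, letting $W_{jj}$ genuinely be a nonzero multiple of $\one$ only on a subset; I need to redo the span argument allowing that $\{U_1,\dots,U_\kappa\}$, while a basis, can be $\pi$-orthogonal without every $W_{jj}$ being literally constant if we don't assume all coordinates are "active." In fact $W_{jj}\in\operatorname{span}(U_1)$ is forced, period — $W_{jj}$ is $\pi$-orthogonal to the basis vectors $U_2,\dots,U_4$ so it is a scalar multiple of $U_1=\one$ — so $W_{jj}$ \emph{is} constant. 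The resolution for Case B must instead be that I have miscounted which $\nu$'s vanish: the hypothesis is $\nu_{ijj}=0$ for $j\ge i>1$, which for $\kappa=4$ is the six conditions $(i,j)\in\{(2,2),(2,3),(2,4),(3,3),(3,4),(4,4)\}$; note $(3,2)$ and $(4,2)$, $(4,3)$ are \emph{not} assumed (they equal $\nu_{223},\nu_{224},\nu_{334}$ which are not of the form $\nu_{ijj}$). So I only get $\langle U_i, W_{jj}\rangle = 0$ for the pairs with $i\le j$, not all $i$. That is the crucial subtlety: $W_{jj}$ is $\pi$-orthogonal to $U_1$ (wait, $\nu_{1jj}$: is that $0$? $\nu_{1jj}=\langle U_j,U_j\rangle=1\ne0$), hmm, $\langle\one,W_{jj}\rangle = \sum\pi_l U_{lj}^2 = \langle U_j,U_j\rangle = 1$, so $W_{jj}$ is \emph{not} orthogonal to $\one$. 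Let me restate: $W_{jj}$ is $\diag(\boldsymbol\pi)$-orthogonal to $U_i$ for $1<i\le j$ only. So $W_{22}\perp U_2$; $W_{33}\perp U_2,U_3$; $W_{44}\perp U_2,U_3,U_4$, hence $W_{44}\in\operatorname{span}(\one)$ giving $U_{l4}^2$ constant. Then peel off: in the orthogonal complement of $U_4$ (a 3-space containing $\one,U_2,U_3$) we have $W_{33}\perp U_2,U_3$, so $W_{33}$ is $\pi$-orthogonal to $U_2,U_3$ but lives in $\mathbb R^4$, dimension count gives $W_{33}\in\operatorname{span}(\one,U_4)$, etc.

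So the real argument is an inductive "peeling," and the branching that produces Cases A and B is exactly the freedom in how $W_{33}$ and $W_{22}$ sit relative to $\one$ and the already-determined columns. I would organize it as: (1) from $\nu_{44k}=0$ for $k=2,3,4$... no, only $\nu_{i44}=0$ for $i\le 4$, i.e.\ $i=2,3,4$, plus $\nu_{144}=1\neq0$ and the symmetric $\nu_{244}=\nu_{224}$? Ugh — $\nu_{224}=0$ is \emph{not} assumed. OK: assumed set, using symmetry of $\nu$, is $\{\nu_{222},\nu_{233},\nu_{244},\nu_{333},\nu_{344},\nu_{444}\}=0$. Rewriting $\nu_{233}=\nu_{323}$: as $\langle U_3, W_{23}\rangle$? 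This is getting delicate; the honest plan is: introduce $s_j^2$-type quantities carefully, set up the six polynomial equations $\nu_{ijj}=0$ plus the six orthonormality equations from (\ref{eq:orth2}), and solve this polynomial system in the $12$ unknowns ($U_{lj}$ for $l\in[4],j\in\{2,3,4\}$) and $\boldsymbol\pi$ (three more), modulo the symmetry group $S_4\times(\mathbb Z/2)^3$. The main obstacle — and where the real work lies — is showing this system has \emph{exactly} the two solution families A and B and no others: this is a finite but genuinely intricate case analysis (a Gröbner-basis or resultant computation would do it mechanically, but by hand one argues by the peeling/zero-pattern dichotomy, tracking which columns have a zero entry, since a zero entry in some $U_i$ is what breaks the "all magnitudes equal" conclusion and opens the door to the non-uniform $\boldsymbol\pi=(1/8,1/8,1/4,1/2)$ of Case B). I would prove that either no nontrivial column has a zero entry — forcing, via $UU^T=\diag(\boldsymbol\pi)^{-1}$ on the diagonal, $\boldsymbol\pi$ uniform and, via the sign-orthogonality of the resulting $\pm1$/$\sqrt2$... actually $\pm$(constant) columns, exactly the parametrized family of Case A with $b^2+c^2=2$ — or some column has a zero, and then a short further analysis of the zero-patterns compatible with orthogonality and with $\langle\one,U_j\rangle=0$ pins down Case B uniquely. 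Verifying that the two exhibited $U$'s do satisfy all six $\nu_{ijj}=0$ and (\ref{eq:orth2}) is the routine closing check.
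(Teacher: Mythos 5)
You end up, after several self-corrections, with the same structural skeleton the paper uses: work with the inner product $\langle x,y\rangle=x^T\diag(\boldsymbol\pi)y$, observe that the hypothesis only gives $\langle W_{jj},U_i\rangle=\nu_{ijj}=0$ for $1<i\le j$, and hence obtain the filtration $W_{44}\perp U_2,U_3,U_4$ (so $W_{44}$ is a multiple of $\one$ and $U_4$ is a $\pm1$ vector), $W_{33}\in\operatorname{span}(\one,U_4)$, $W_{22}\in\operatorname{span}(\one,U_3,U_4)$. That is exactly the paper's starting point. However, the lemma is a classification statement, and the classification itself is missing from your proposal: you explicitly defer it to ``a finite but genuinely intricate case analysis'' or a Gr\"obner-basis computation. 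In the paper this case analysis is the entire content of the proof: after reducing $U_4$, up to permutation of states and a sign, to one of the two sign patterns $(1,1,-1,-1)$ or $(1,1,1,-1)$, one peels off $U_3$ and then $U_2$, at each stage combining the span containment for $W_{jj}$ with orthogonality to explicit vectors such as $(1,1,0,0)$ and $(0,0,1,1)$ (Case A) or $(0,0,0,1)$, $(1,1,1,0)$, $(1,1,0,0)$ (Case B), together with the unit-length normalization and the remaining relation $\langle U_2,W_{22}\rangle=0$, thereby solving for $\boldsymbol\pi$ along the way ($\pi_1+\pi_2=\pi_3+\pi_4=1/2$ and ultimately $\boldsymbol\pi$ uniform in Case A; $\boldsymbol\pi=(1/8,1/8,1/4,1/2)$ in Case B). None of these computations appear in your write-up.

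Moreover, the organizing dichotomy you propose for the by-hand analysis --- either no nontrivial column of $U$ has a zero entry, forcing constant-magnitude columns and uniform $\boldsymbol\pi$, or some column has a zero, which ``pins down Case B uniquely'' --- fails on both sides. Case A with $b\ne c$ and $bc\ne0$ has no zero entries, yet its columns $U_2,U_3$ are not of constant magnitude, so the shortcut via the diagonal of $UU^T=\diag(\boldsymbol\pi)^{-1}$ does not apply there (it was valid only under your earlier, mistaken reading that $W_{jj}\in\operatorname{span}(\one)$ for every $j>1$); and Case A with $b=0$ or $c=0$ does have zero entries, so the zero branch cannot terminate in Case B alone. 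The correct branching, as in the paper, is on the sign pattern of the forced $\pm1$ vector $U_4$, not on the zero pattern of the columns. So while your framework coincides with the paper's, the proof as proposed has a genuine gap: the case analysis that constitutes the lemma is neither carried out nor correctly set up.
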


\begin{proof}
We use the notation of Proposition \ref{prop:non0}, including the
inner product and definition of vectors $W_{ij}$ given in its proof.
Orthogonality and lengths will always be with respect to that inner
product.

We will repeatedly use that for $i,j$ with $1<i\le j$,
$$\langle
W_{jj}, U_i \rangle=\nu_{ijj}=0.$$

In particular, setting $j=4$, we find $W_{44}$ is orthogonal to
$U_2,U_3,U_4$, and hence is a multiple of $U_1=\one$. This implies
$$U_4=(\pm 1,\pm1 ,\pm1, \pm1),$$ since $U_4$ has length 1. Without
loss of generality, by possibly permuting the rows of $U$ (which is
equivalent to changing the ordering of the states in writing down
the rate matrix $Q$), and then possibly multiplying $U_4$ by $-1$,
we need now only consider two cases: either
\begin{align*}
&\text{Case A: }\ \ \ \  U_4=(1,1,-1,-1), \text{ or}\\
&\text{Case B: }\ \ \ \  U_4=(1,1,1,-1).
\end{align*}
We consider these two cases separately.
\smallskip

\noindent \textbf{Case A:} Since $U_1=\one$ and $U_4=(1,1,-1,-1)$,
the orthogonality of $U_1$ and $U_4$ gives
$$\pi_1+\pi_2-\pi_3-\pi_4=0.$$
Since $\sum_{i=1}^4 \pi_i=1$, this tells us
\begin{equation}
\pi_1+\pi_2=1/2,\ \ \pi_3+\pi_4=1/2.\label{eq:psum}
\end{equation}

Now since $W_{33}$ is orthogonal to both $U_2$ and $U_3$, then
$W_{33}$ is a linear combination of $U_1$ and $U_4$, and hence
$W_{33}=(b^2,b^2,c^2,c^2)$. Thus
$$U_3=(\pm b, \pm b, \pm c,\pm c).$$
Since $U_3$ is orthogonal to both $U_1$ and $U_4$, it is orthogonal
to their linear combinations, and in particular to $(1,1,0,0)$ and
$(0,0,1,1)$. Thus, by permuting the first two entries of the $U_i$,
and also permuting the last two entries of the $U_i$, if necessary,
we may assume
$$U_3=(b,-b,c,-c)$$ with $b,c\ge 0$. This orthogonality further shows
$$b\pi_1-b\pi_2=0,\ \ c\pi_3-c\pi_4=0.$$
Thus $$\pi_1=\pi_2,\ \ \text{or \ } b=0,$$ and
$$\pi_3=\pi_4,\ \ \text{or \ } c=0.$$
In light of equations (\ref{eq:psum}), we have
$$\pi_1=\pi_2=1/4,\ \ \text{or \ } b=0,$$ and
$$\pi_3=\pi_4=1/4,\ \ \text{or \ } c=0.$$

\

In any of these cases,  $U_3$ has length 1 so
$$b^2(\pi_1+\pi_2)+c^2(\pi_3+\pi_4)=1.$$
Together with equations (\ref{eq:psum}) this gives that
$$b^2+c^2=2.$$

Now since $U_2$ is orthogonal to $U_1,U_3,U_4$, we must have that
$$U_2=a(c/\pi_1,-c/\pi_2, -b/\pi_3,b/\pi_4)$$ for some $a$, and we
may assume $a>0$. But the length of $U_2$ is 1, and $U_2$ is
orthogonal to $W_{22}$, so
\begin{align} &c^2/\pi_1
+c^2/\pi_2+b^2/\pi_3+b^2/\pi_4=1/a^2,\label{eq:f1}\\
&c^3/\pi_1^2-c^3/\pi_2^2-b^3/\pi_3^2+b^3/\pi_4^2=0.\label{eq:f2}
\end{align}

If neither of $b,c$ is zero, so all $\pi_i=1/4$, then  equation
(\ref{eq:f1}) tells us $a=1/4$, as the statement of the theorem
claims.

If $b=0$, then we already know $c=\sqrt2$, and $\pi_3=\pi_4=1/4$.
But equation (\ref{eq:f2}) implies $\pi_1=\pi_2$, so these are also
$1/4$. We then find from equation (\ref{eq:f1}) that $a=1/4$, and we
have another instance of the claimed characterization of case A.
Similarly, if $c=0$ we obtain the remaining instance.

\medskip

\noindent \textbf{Case B:} Since $U_1=\one$ and $U_4=(1,1,1,-1)$,
 the orthogonality of $U_1$ and $U_4$ implies
$$\pi_1+\pi_2+\pi_3-\pi_4=0.$$

Now $W_{33}$ is orthogonal to $U_2$ and $U_3$, and hence is a linear
combination of $U_1$ and $U_4$. Thus $W_{33}=(b^2,b^2,b^2,c^2),$ so
$$U_3=(\pm b, \pm b ,\pm b, c).$$ But $U_3$ is orthogonal to both
$U_1$ and $U_4$, and hence orthogonal to their linear combinations,
including $(0,0,0,1)$ and $(1,1,1,0)$. This shows $c=0$ and that
(possibly by permuting the first three rows of $U$, and multiplying
$U_3$ by $-1$) we may assume $U_3=b(1,1,-1,0)$ for some $b>0$.
Orthogonality of $U_3$ and $U_1$ then shows
$$\pi_1 +\pi_2 -\pi_3 =0.$$

Also $W_{22}$ is orthogonal to $U_2$, and hence is a linear
combination of $U_1,U_3,U_4$, so $W_{22}=(d^2,d^2,e^2,f^2)$. Thus
$$U_2=(\pm d,\pm d,e, f).$$ However, since $U_2$ is orthogonal to
$U_1,U_3,U_4$, it is orthogonal to $(0,0,0,1)$, $(0,0,1,0)$, and
$(1,1,0,0)$. Thus we may assume $U_2=d(1,-1,0,0)$ with $d>0$.
Finally, orthogonality of $U_2$ and $U_1$ implies
$$\pi_1-\pi_2=0.$$

All the above equations relating the $\pi_i$, together with the fact
that $\sum_{i=1}^4\pi_i=1$ gives
$$\boldsymbol \pi=\pi_1(1,\,1,\,2,\,4)=(1/8,\,1/8,\,1/4,\,1/2).$$

We can now determine the $U_i$ exactly, using that they must have
length 1, to show $U$ is as claimed.

\end{proof}

\section{Identifiability in the exceptional cases
($\kappa=4$)}\label{sec:badid}

We now complete the proof of Theorem \ref{thm:id} by showing
identifiability in cases A and B of Lemma \ref{lem:except}. We do
this by first establishing some inequalities for the eigenvalues of
$Q$ that must hold in each of these cases, using the assumption that
the off-diagonal entries of $Q$ are positive.

Note that as $U^{-1}=U^T\diag(\boldsymbol\pi)$, and the entries of
$\boldsymbol \pi$ are positive, the positivity of the off-diagonal
entries of $Q$ is equivalent to the positivity of the off-diagonal
entries of the symmetric matrix
$$\widetilde Q=U\diag(0,\lambda_2,\lambda_3,\lambda_4)U^T.$$

\begin{lemma}\label{lem:Aineq} For $\kappa=4$, let
$0=\lambda_1>\lambda_2\ge \lambda_3\ge \lambda_4$
denote the eigenvalues of a GTR rate matrix $Q$. Then the following
additional inequalities hold in cases A and B of Lemma
\ref{lem:except}:

\smallskip

Case A: If \  $bc\ne0$, then $\lambda_4>\lambda_2+\lambda_3,$ while
if \ $bc=0$, then $\lambda_4> 2\lambda_2$.

\smallskip

Case B: $\lambda_4>2\lambda_2. $
\end{lemma}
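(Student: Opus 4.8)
The plan is to read off the required inequalities from the positivity of the off-diagonal entries of $\widetilde Q=U\diag(0,\lambda_2,\lambda_3,\lambda_4)U^T$, which, as noted just above the lemma, is equivalent to the positivity of the off-diagonal entries of $Q$ itself. Writing $U_k$ for the $k$th column of $U$ and using $\lambda_1=0$, we have $\widetilde Q=\sum_{k=2}^{4}\lambda_k\,U_kU_k^T$, so each off-diagonal entry $\widetilde Q_{mn}$ (with $m\ne n$) is the explicit linear form $\sum_{k=2}^{4}\lambda_k U_{mk}U_{nk}$ in the variables $\lambda_2,\lambda_3,\lambda_4$. Substituting the explicit matrices $U$ supplied by Lemma \ref{lem:except}, the computation of these entries in each of Cases A and B is short and mechanical, and the claimed inequalities will come out by taking suitable nonnegative linear combinations of the inequalities $\widetilde Q_{mn}>0$.

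In Case A the columns $U_2=(c,-c,-b,b)^T$, $U_3=(b,-b,c,-c)^T$, $U_4=(1,1,-1,-1)^T$ give
$$\widetilde Q_{12}=-c^2\lambda_2-b^2\lambda_3+\lambda_4,\qquad \widetilde Q_{34}=-b^2\lambda_2-c^2\lambda_3+\lambda_4,$$
while the remaining off-diagonal entries turn out to equal $\pm\,bc(\lambda_2-\lambda_3)-\lambda_4$ and are not needed. If $bc\ne 0$, then adding the inequalities $\widetilde Q_{12}>0$ and $\widetilde Q_{34}>0$ and using $b^2+c^2=2$ gives $2\lambda_4>2(\lambda_2+\lambda_3)$, i.e.\ $\lambda_4>\lambda_2+\lambda_3$. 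If $bc=0$, then one of $b,c$ equals $\sqrt2$ and the other $0$; if $b=0$ the inequality $\widetilde Q_{12}>0$ reads $-2\lambda_2+\lambda_4>0$, and if $c=0$ the inequality $\widetilde Q_{34}>0$ reads $-2\lambda_2+\lambda_4>0$, so either way $\lambda_4>2\lambda_2$. In Case B the columns $U_2=(2,-2,0,0)^T$, $U_3=(\sqrt2,\sqrt2,-\sqrt2,0)^T$, $U_4=(1,1,1,-1)^T$ give
$$\widetilde Q_{12}=-4\lambda_2+2\lambda_3+\lambda_4,\qquad \widetilde Q_{13}=-2\lambda_3+\lambda_4,$$
with every other off-diagonal entry equal either to $-\lambda_4$ (automatically positive, since $\lambda_4<0$) or to $\widetilde Q_{13}$; adding the inequalities $\widetilde Q_{12}>0$ and $\widetilde Q_{13}>0$ cancels the $\lambda_3$ terms and yields $2\lambda_4>4\lambda_2$, that is, $\lambda_4>2\lambda_2$.

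I do not anticipate a serious obstacle: once one observes that the off-diagonal entries of $\widetilde Q$ are affine in the eigenvalues, the proof is a bounded computation together with the choice of the linear combination that kills the unwanted eigenvalue. The only points calling for a little care are distinguishing the genuinely constraining entries from those that are automatically positive (the ones proportional to $-\lambda_4$), and the minor case split $bc\ne0$ versus $bc=0$ in Case A, where the stronger bound $\lambda_4>2\lambda_2$ comes from one inequality rather than from the sum of two.
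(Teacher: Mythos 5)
Your proposal is correct and follows essentially the same route as the paper: compute the off-diagonal entries of $\widetilde Q=U\diag(0,\lambda_2,\lambda_3,\lambda_4)U^T$ explicitly from the matrices $U$ of Lemma \ref{lem:except}, and extract the inequalities from the positivity of the $(1,2)$ and $(3,4)$ entries in Case A (summing them and using $b^2+c^2=2$, with the single entry sufficing when $bc=0$) and of the $(1,2)$ and $(1,3)$ entries in Case B. The paper's proof is the same computation, so no further comment is needed.
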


\begin{proof} For case A, one computes that
$$\widetilde Q =
\begin{pmatrix}
*&-\lambda_2c^2-\lambda_3b^2+\lambda_4&-\lambda_2bc+\lambda_3
bc-\lambda_4& \lambda_2 bc -\lambda_3 bc -\lambda_4\\
*&*&\lambda_2bc-\lambda_3bc-\lambda_4&-\lambda_2bc+\lambda_3bc-\lambda_4\\
*&*&*&-\lambda_2b^2-\lambda_3c^2+\lambda_4\\ *&*&*&*
\end{pmatrix}
$$ where the stars indicate quantities not of interest. From the positivity of the (1,2) and (3,4) entries of
$\widetilde Q$, we thus know
$$\lambda_4>\max(\lambda_2c^2+\lambda_3b^2,
\lambda_2b^2+\lambda_3c^2)\ge\frac {(\lambda_2c^2+\lambda_3b^2)+
(\lambda_2b^2+\lambda_3c^2)}2.$$ Since $b^2+c^2=2$, this shows
$\lambda_4>\lambda_2+\lambda_3.$ In the case when $bc=0$, so
$(b,c)=(0,\sqrt 2)$ or $(\sqrt 2,0)$, the first inequality gives the
stronger statement of the proposition.

For case B,
$$ \widetilde Q =\begin{pmatrix}
*&-4\lambda_2+2\lambda_3+\lambda_4&-2\lambda_3
+\lambda_4&-\lambda_4\\
*&*&-2\lambda_3+\lambda_4&-\lambda_4\\
*&*&*&-\lambda_4
\\ *&*&*&*
\end{pmatrix}
$$
From the positivity of the off-diagonal entries, we see that
$$\lambda_4>2\lambda_3,\ \ \lambda_4+2\lambda_3>4\lambda_2.$$
Together, these imply that $\lambda_4>2\lambda_2$.
\end{proof}

\bigskip

We now return to proving identifiability for the exceptional cases.
As in the proof of Theorem \ref{thm:genid}, we may determine the
relative rankings of $t_a$, $t_b$ and $t_c$, and therefore assume
$$0\le t_a\le t_b\le t_c,$$
with $t_b>0$.
\medskip

In case A, we find that $\nu_{234}=bc$, so we break that case into
two subcases, \begin{align*} &\text{Case A1: if $b,c\ne 0$; and}\\
&\text{Case A2: if $b$ or $c=0$.}
\end{align*}

\smallskip

\noindent \textbf{Case A1:} In this case, we find that $\nu_{ijk}\ne
0$ for all distinct $i,j,k>1$. Letting
\begin{align*}
D_{342}&=L_\alpha(\lambda_3 t_a+\lambda_4 t_b+\lambda_2 t_c),\\
D_{423}&=L_\alpha(\lambda_4 t_a+\lambda_2 t_b+\lambda_3 t_c).
\end{align*}
and $A_k,B_k,C_k$ be as in the proof of Theorem \ref{thm:genid},
observe that
\begin{equation*}
L_\alpha^{-1}(D_{342})+L_\alpha^{-1}(D_{423})=
L_\alpha^{-1}(A_2)+L_\alpha^{-1}(B_3)+L_\alpha^{-1}(C_4).
\end{equation*}
Setting $\beta=1/\alpha$ and using the explicit formula for
$L_\alpha$ yields
\begin{equation}
D_{342}^{-\beta}+D_{423}^{-\beta}-A_2^{-\beta}-B_3^{-\beta}-C_4^{-\beta}+1=0.
\label{eq:JClike}\end{equation} Note that by Proposition
\ref{prop:triple} all constants in this equation, except possibly
$\beta$, are uniquely determined by the joint distribution.

In preparation for applying Lemma \ref{lem:convex}, we claim that
the following inequalities hold:
\begin{align} D_{342}\le A_2,\label{ineq:1}\\
D_{423}<B_3,\label{ineq:2}\\ D_{342}<C_4,\label{ineq:3}\\
D_{423}<C_4.\label{ineq:4}
\end{align}
Inequalities (\ref{ineq:1},\ref{ineq:2}) follow easily from the fact
that $L_\alpha$ is increasing. For inequality (\ref{ineq:3}), note
first that $\lambda_3t_a+\lambda_4t_b+\lambda_2t_c\le
(\lambda_2+\lambda_3)t_a+\lambda_4t_b$. But Lemma \ref{lem:Aineq}
indicates $\lambda_2+\lambda_3<\lambda_4$, so, again using that
$L_\alpha$ is increasing, the claim follows. Inequality
(\ref{ineq:4}) is similarly shown to hold.

Finally, to apply Lemma \ref{lem:convex}, let $d_1=D_{342}$,
$d_2=D_{423}$. The remainder of the constants in the lemma are
chosen in one of three ways, depending on which of $A_2,B_3,C_4$ is
largest:

 If $C_4\ge A_2, B_3$, then let $a=A_2$, $b=B_3$, $c=C_4$.

 If $A_2\ge C_4, B_3$, then let $a=C_4$, $b=B_3$, $c=A_2$.

 If $B_3\ge C_4, A_2$, then let $a=A_2$, $b=C_4$, $c=B_3$.

\noindent Thus in all subcases, from equation (\ref{eq:JClike}) we
find that $\beta>0$ is uniquely determined.

The remainder of the proof now proceeds exactly as for Theorem
\ref{thm:genid}.

\smallskip

\noindent \textbf{Cases A2 and B:} In both of these cases
$\nu_{224}\ne 0$, so, similarly to the previous case, letting
\begin{align*}
D_{422}&=L_\alpha(\lambda_4t_a+\lambda_2 t_b+\lambda_2t_c),\\
D_{242}&=L_\alpha(\lambda_2t_a+\lambda_4 t_b+\lambda_2t_c),
\end{align*}
leads to
\begin{equation}
D_{422}^{-\beta}+D_{242}^{-\beta}-C_4^{-\beta}-A_2^{-\beta}-B_2^{-\beta}
+1=0. \label{eq:notJClike}
\end{equation}
By Proposition \ref{prop:triple}, we know all quantities in this
equation except possibly $\beta$ are uniquely determined from the
joint distribution.

We also note the following inequalities hold:
\begin{align}
A_2&\le B_2,\label{ineq:5}
\\D_{422}&\le A_2,\label{ineq:6}
\\D_{242}&<B_2,\label{ineq:7}
\\D_{242}&<C_4.\label{ineq:8}
\end{align}
Inequalities (\ref{ineq:5}--\ref{ineq:7}) are implied by the fact
the $L_\alpha$ is increasing. Inequality (\ref{ineq:8}) will follow
from $\lambda_2(t_a+t_c)<\lambda_4 t_a$. However,
$\lambda_2(t_a+t_c)\le 2\lambda_2t_a<\lambda_4 t_a$ by Lemma
\ref{lem:Aineq}.

To apply Lemma \ref{lem:convex}, let $d_1=D_{422}$ and
$d_2=D_{242}$. In light of inequality (\ref{ineq:5}), we need assign
the remaining constants according to only two cases:

If $C_4\ge B_2$, let $a=A_2$, $b=B_2$, and $c=C_4$.

If $B_2 \ge C_4$, let $a=A_2$, $b=C_4$, and $c=B_2$.

\noindent In both cases, we find $\beta$ is uniquely determined, and
the the proof of identifiability can be completed as in Theorem
\ref{thm:genid}.

\medskip

Thus identifiability of the GTR+$\Gamma$ model when $\kappa=4$ is
established for all cases.

\section{Open problems}\label{sec:open}

Many questions remain on the identifiability of phylogenetic models,
including those commonly used for data analysis.

Perhaps the most immediate one is the identifiability of the
GTR+$\Gamma$+I model. Despite its widespread use in inference, no
proof has appeared that the tree topology is identifiable for this
model, much less its numerical parameters. Although our algebraic
arguments of Section \ref{sec:alg} apply, analogs for GTR+$\Gamma$+I
of the analytic arguments we gave for GTR+$\Gamma$ are not obvious.
While the $\Gamma$ rate distribution has only one unknown parameter,
$\Gamma$+I has two, and this increase in dimensionality seems to be
at the heart of the difficulty. Interestingly, empirical studies
\cite{SSN99} have also shown that these parameters can be difficult
to tease apart, as errors in their inferred values can be highly
correlated in some circumstances. Although we conjecture that
GTR+$\Gamma$+I is identifiable for generic parameters, we make no
guess as to its identifiability for all parameters.

For computational reasons, standard software packages for
phylogenetic inference implement a discretized $\Gamma$ distribution
\cite{Yang94}, rather than the continuous one dealt with in this
paper. While results on continuous distributions are suggestive of
what might hold in the discrete case, they offer no guarantee. It
would therefore also be highly desirable to have proofs of the
identifiability of the discretized variants of GTR+$\Gamma$ and
GTR+$\Gamma$+I, either for generic or all parameters. Note that such
results might depend on the number of discrete rate classes used, as
well as on other details of the discretization process. So far the
only result in this direction is that of \cite{ARidtree} on the
identifiability of the tree parameter, for generic numerical
parameter choices when the number of rate classes is less than the
number of observable character states (\emph{e.g.}, at most 3 rate
classes for 4-state nucleotide models, or at most 60 rate classes
for 61-state codon models). As the arguments in that work use no
special features of a $\Gamma$ distribution, or even of an
across-site rate variation model, we suspect that stronger claims
should hold when specializing to a particular form of a discrete
rate distribution.

Finally, we mention that beyond \cite{ARidtree}, almost nothing is
known on identifiability of models with other types of
heterogeneity, such as covarion-like models and general mixtures. As
these are of growing interest for addressing biological questions
\cite{GasGui,PagMea,KT}, much remains to be understood.

\acks{ESA and JAR thank the Institute for Mathematics and Its
Applications and the Isaac Newton Institute, where parts of
this work were undertaken, for their hospitality and funding.
Work by ESA and JAR was also supported by the National Science
Foundation (DMS 0714830).}

\bibliographystyle{plain}

\bibliography{Phylo}

\newpage

\appendix

\section{The gaps in Rogers' proof}

Here we explain the gaps in the published proof of Rogers
\cite{Rog01} that the GTR+$\Gamma$+I model is identifiable. Since
that paper has been widely cited and accepted as correct, our goal
is to clearly indicate where the argument is flawed, and illustrate,
through some examples, the nature of the logical gaps.

We emphasize that we do not \emph{prove} that the gaps in the
published argument cannot be bridged. Indeed, it seems most likely
that the GTR+$\Gamma$+I model is identifiable, at least for generic
parameters, and it is possible a correct proof might follow the
rough outline of \cite{Rog01}. However, we have not been able to
complete the argument Rogers attempts. Our own proof of the
identifiability of the GTR+$\Gamma$ model presented in the body of
this paper follows a different line of argument.

We assume the reader of this appendix will consult \cite{Rog01}, as
pinpointing the flaws in that paper requires rather technical
attention to the details in it.

\subsection{Gaps in the published proof}

There are two gaps in Rogers' argument which we have identified. In
this section we indicate the locations and nature of these flaws,
and in subsequent ones we elaborate on them individually.

The first gap in the argument occurs roughly at the break from page
717 to page 718 of the article. To explain the gap, we first outline
Rogers' work leading up to it. Before this point, properties of the
graph of the function $\nu^{-1}(\mu(x))$ have been carefully
derived. An example of such a graph, for particular values of the
parameters $\alpha,a,\pi,p$ occurring in the definitions of $\nu$
and $\mu$, is shown in Figure 2 of the paper. For these parameter
values and others, the article has carefully and correctly shown
that for $x\ge 0$ the graph of $\nu^{-1}(\mu(x))$
\begin{enumerate}
\item is increasing,
\item has a single inflection point, where the graph changes from
convex to concave (i.e, the concavity changes from upward to
downward),
\item has a horizontal asymptote as $x\to \infty$.
\end{enumerate}
Although the article outlines other cases for different ranges of
the parameter values, Rogers highlights the case when these three
properties hold.

At the top of page 718 of the article, Figure 3 is presented,
plotting the points whose coordinates are given by the pairs
$(\nu^{-1}(\mu(\tau_1\lambda_i)),\nu^{-1}(\mu(\tau_2\lambda_i)))$
for all $\lambda_i\ge 0$. Here $\tau_2>\tau_1$ are particular
values, while $\alpha,a, \pi,p$ are given the values leading to
Figure 2. Rogers points out that ``As in Figure 2, the graph [of
Figure 3] has an inflection point, is concave upwards before the
inflection point, and is concave downwards after the inflection
point.'' Then he claims that ``Similar graphs will be produced for
any pair of path distances such that $\tau_2>\tau_1$.'' However, he
gives no argument for this claim. As the remainder of the argument
strongly uses the concavity properties of the graph of his Figure 3
(in the second column on page 718 the phrase ``\dots as shown by
Figure 3'' appears), without a proof of this claim the main result
of the paper is left unproved.

Judging from the context in which it is placed, a more complete
statement of the unproved claim would be that for any values of
$\alpha,a,\pi,p$ resulting in a graph of $\nu^{-1}(\mu(x))$ with the
geometric properties of Figure 2, and any $\tau_2>\tau_1$, the graph
analogous to Figure 3 has a single inflection point. As no argument
is given to establish the claim, we can only guess what the author
intended for its justification. From what appears earlier in the
paper, it seems likely that the author believed the three geometric
properties of the graph in Figure 2 enumerated above implied the
claimed properties of Figure 3. However, that is definitely not the
case, as we will show in Section \ref{sec:graph} below.

Note that we do not assert that the graphs analogous to Figure 3 for
various parameter values are not as described in \cite{Rog01}. While
plots of them for many choices of parameter values certainly suggest
that Rogers' claim holds, it is of course invalid to claim a proof
from examples. Moreover, with 4 parameters $\alpha,a,\pi,p$ to vary,
it is not clear how confident one should be of even having explored
the parameter space well enough to make a solid conjecture. In light
of the example we give in Section \ref{sec:graph}, justifying
Rogers' claim would require a much more detailed analysis of the
functions $\nu$ and $\mu$ than Rogers attempts.

\medskip

If this first gap in the proof were filled, a second problem would
remain. Though less fundamental to the overall argument, this gap
would mean that identifiability of the model would be established
for \emph{generic} parameters, but that there might be exceptional
choices of parameters for which identifiability failed. (`Generic'
here can be taken to mean for all parameters except those lying in a
set of Lebesgue measure zero in parameter space. More informally,
for any reasonable probability distribution placed on the parameter
space, randomly-chosen parameters will be generic.)

Although the origin of this problem with non-generic parameters is
clearly pointed out by Rogers, it is open to interpretation whether
he attempts to extend the proof to all parameter values at the very
end of the article. However, as the abstract and introductory
material of \cite{Rog01} make no mention of the issue, this point at
the very least seems to have escaped many readers attention.

This gap occurs because the published argument requires that the
non-zero eigenvalues of the GTR rate matrix $Q$ be three
\emph{distinct} numbers. On page 718, at the conclusion of the main
argument, it is stated that ``Therefore, if the substitution rate
matrix has three distinct eigenvalues, the parameters of the
I+$\Gamma$ rate heterogeneity will be uniquely determined.'' The
author then goes on to point out that for the Jukes-Cantor and
Kimura 2-parameter models this assumption on eigenvalues is
violated, but ``[f]or real data sets, however, it is unlikely that
any two or all three of the eigenvalues will be exactly identical.''

Leaving aside the question of what parameters one might have for a
model which fits a real data set well, Rogers here clearly indicates
that his proof of identifiability up to this point omits some
exceptional cases. In the concluding lines of the paper, he points
out that these exceptional cases can be approximated arbitrarily
closely by parameters with three distinct eigenvalues. While this is
true, such an observation cannot be used to argue that the
exceptional cases are not exceptional, as we will discuss below in
Section \ref{sec:generic}. It is unclear whether the concluding
lines of \cite{Rog01} were meant to `fill the gap' or not.

Of course, one might not be too concerned about exceptional cases.
Indeed, if the first flaw were not present in his argument, then
Rogers' proof would still be a valuable contribution in showing that
for `most' parameter values identifiability held. One might then
look for other arguments to show identifiability also held in the
exceptional cases.  Nonetheless, it is disappointing that the
exceptional cases include models such as the Jukes-Cantor and Kimura
2-parameter that are well-known to biologists and might be
considered at least reasonable approximations of reality in some
circumstances.

\subsection{A counterexample to the graphical
argument}\label{sec:graph}

It seems that the origin of the first flaw in Rogers' argument is in
a belief that the three enumerated properties he proves are
exhibited in his Figure 2 result in the claimed properties of his
Figure 3. In this section, we show this implication is not valid, by
exhibiting a function whose graph has the three properties, but when
the graph analogous to Figure 3 is constructed, it has multiple
inflection points.

\smallskip

Let $$f(x)=\int_0^x
\exp\left(\exp\left(-10(t-1)^2\right)-\frac{(1-t)^2}{10}\right)\,dt.$$
Then $f(0)=0$, and
$$f'(x)=\exp\left(\exp\left(-10(x-1)^2\right)-\frac{(1-x)^2}{10}\right),$$ so
$f'(x)>0$ and $f$ is increasing. Furthermore, one sees that $f'(x)$
decays quickly enough to 0 as $x\to \infty$, so that $f(x)$ has a
horizontal asymptote as $x\to \infty$.

To see that $f(x)$ has a single inflection point where the graph
passes from convex to concave, it is enough to show $f'(x)$ has a
unique local maximum and no local minima. But this would follow from
$g(x)=\ln(f'(x))$ having a unique local maximum and no local minima.
Since
$$g(x)=\exp\left(-10(x-1)^2\right)-\frac{(1-x)^2}{10},$$ and the two summands
here have unique local maxima at $x=1$ and no local minima, $g$ must
as well. Thus $f$ exhibits the enumerated properties of Rogers'
Figure 2. For comparison, we graph $f$ in our Figure \ref{fg:graphf}
below.
\begin{figure}[h]
\begin{center}
\includegraphics[height= 1.in]{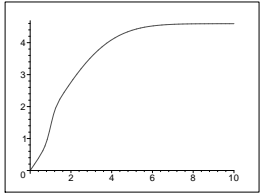}
\end{center}
\caption{The graph $y=f(x)$.}\label{fg:graphf}
\end{figure}

\medskip

The analog of Figure 3 for the function $f$ would show the points
$(f(\tau_1 x),f(\tau_2 x))$. If we choose $\tau_1=1,\tau_2=2$, we
obtain the graph shown in our Figure \ref{fg:badgraph}.
\begin{figure}[h]
\begin{center}
\includegraphics[height= 1.in]{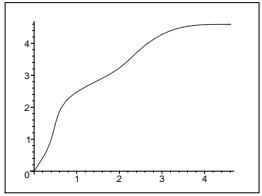}
\end{center}
\caption{The points $(f(x),f(2x))$.}\label{fg:badgraph}
\end{figure}
Obviously, the curve in Figure \ref{fg:badgraph} has multiple
--- at least three --- inflection points. Although we will not give a
formal proof here that this curve has multiple inflection points, it
is not difficult to do so.

\medskip

\subsection{Identifiability for generic parameters vs.~all parameters
}\label{sec:generic}

The second gap in Rogers' argument arises because it is possible to
have identifiability for generic parameters, but not for all
parameters. Even if identifiability of generic parameters has been
proved, then one cannot easily argue that identifiability must hold
for the non-generic, exceptional cases as well. To illustrate this,
we give a simple example.

Consider the map $\phi:\mathbb R^2\to \mathbb R^2$, defined by
$$\phi(a,b)=\left(a,\,ab\right).$$
Here $a,b$ play the roles of `parameters' for a hypothetical model,
whose `joint distribution' is given by the vector-valued function
$\phi$.

Suppose $(x,y)$ is a particular distribution which arises from the
model (\emph{i.e.}, is in the image of $\phi$), and we wish to find
$a,b$ such that $\phi(a,b)=(x,y)$. Then provided $x\ne 0$ (or
equivalently $a\ne 0$), it is straightforward to see that $a,b$ must
be given by the formulas
$$a=x,\ \ b=y/x.$$
Thus for generic $a,b$ (more specifically, for all $(a,b)$ with
$a\ne0$) this hypothetical model is identifiable.

Notice, however, that if $(x,y)=(0,0)$, the situation is quite
different. From $x=0$, we see that we must have $a=0$. But since
$\phi(0,b)=(0,0)$, we find that all parameters of the form $(0,b)$
lead to the same distribution $(0,0)$. Thus these exceptional
parameters are \emph{not} identifiable. Therefore, we have
identifiability precisely for all parameters in the 2-dimensional
$ab$-plane \emph{except} those lying on the 1-dimensional line where
$a=0$. These exceptional parameters, forming a set of lower
dimension than the full space, have Lebesgue measure zero within it.

Notice that even though there are parameter values arbitrarily close
to the exceptional ones $(0,b)$ which are identifiable (for
instance, $(\epsilon,b)$ for any small $\epsilon\ne 0$), it is
invalid to argue that the parameters $(0,b)$ must be identifiable as
well.

This example shows that even if the first flaw in the argument of
\cite{Rog01} were repaired, the approach outlined there will at best
give identifiability for generic parameters. The final lines of that
paper are not sufficient to prove identifiability for all parameter
values.

Obviously the function $\phi$ given here could not really be a joint
distribution for a statistical model, since the entries of the
vector $\phi(a,b)$ do not add to one, nor are they necessarily
non-negative. However, these features can be easily worked into a
more complicated example. If one prefers a less contrived example,
then instances of generic identifiability of parameters but not full
identifiability occur in standard statistical models used outside of
phylogenetics (for instance, in latent class models). We have chosen
to give this simpler example to highlight the essential problem most
clearly.

\end{document}